    \numberwithin{equation}{section}
    \numberwithin{figure}{section}
    \def\R{\mathbb{R}}
    \def\T{\mathbb{T}}
    \def\eps{\varepsilon}
    \renewcommand\leq{\leqslant}
    \renewcommand\geq{\geqslant}
    \theoremstyle{plain}
    \newtheorem{thm}{Theorem}[section]
    \newtheorem{theorem}[thm]{Theorem}
    \newtheorem{lemma}[thm]{Lemma}
    \newtheorem{proposition}[thm]{Proposition}
    \newtheorem{question}[thm]{Question}
    \newtheorem{conjecture}[thm]{Conjecture}
    \newtheorem*{claim*}{Claim}
    \theoremstyle{definition}
    \newtheorem{definition}[thm]{Definition}
    \newtheorem*{definition*}{Definition}
    \newtheorem*{remarks*}{Remarks}
    \newtheorem*{remark*}{Remark}
    \newtheorem{remark}[thm]{Remark}
    \newtheorem{example}[thm]{Example}
    \newcommand{\V}{\mathcal V}
    \newcommand{\SK}{\mathrm{SK}}
    \newcommand{\rest}{\mathrm{rest}}
    \newcommand{\BR}{\mathrm{BR}}
    \newcommand{\x}{\mathbf{x}}
    \newcommand{\dir}{\mathrm{dir}}
    \renewcommand{\P}{\mathrm{P}}
    \renewcommand{\H}{\mathrm{H}}
\begin{document}

	\title{Bourgain's Condition, Sticky Kakeya, and new Examples}
 
	\author{Arian Nadjimzadah}
	\address{UCLA Department of Mathematics, Los Angeles, CA 90095.}
	\email{anad@math.ucla.edu}

	\date{}
	
	\keywords{}

    \begin{abstract}
        We prove that in all dimensions at least 3 and for any H\"ormander-type oscillatory integral operator satisfying Bourgain's condition, the sticky case of the corresponding curved Kakeya conjecture reduces to the sticky case of the classical Kakeya conjecture. This supports a conjecture of Guo--Wang--Zhang, that an operator satisfies the same $L^p$ bounds as in the restriction conjecture exactly when it satisfies Bourgain's condition. 

        Our result follows from a new geometric characterization of Bourgain's condition based on the structure of curved $\delta$-tubes in a $\delta^{1/2}$-tube. 
        We find examples in all dimensions at least 3 which show this property does not persist in a larger tube, and in particular these are the first operators satisfying Bourgain's condition for which there is no diffeomorphism taking the corresponding families of curves to lines. This suggests that a general to sticky reduction in the spirit of Wang--Zahl needs substantial new ideas. We expect these examples to provide a good starting point. 
    \end{abstract}
	
	\maketitle

\section{Introduction}
We recall the definition of a H\"ormander-type phase function and the corresponding H\"ormander-type oscillatory integral operator. Fix $n \geq 2$ and let $\Sigma \subset \R^{n-1}$ and $M \subset \R^{n}$ be open balls, with origins $0_M \in M$ and $0_\Sigma \in \Sigma$. We reserve the right to restrict $M$ and $\Sigma$ to smaller open balls containing $0_M$ and $0_\Sigma$.
We say that a smooth function $\phi : M \times \Sigma \to \R$ is a \emph{H\"ormander-type phase function in $\R^n$} if it satisfies the following conditions: 
\begin{enumerate}
    \item[(H1)] $\mathrm{rank}(\nabla_\x \nabla_\xi \phi(\x,\xi)) = n-1$ on $M \times \Sigma$, and 
    \item[(H2)] with the Gauss map $G : M \times \Sigma \to \R^n \setminus \{0\}$ defined by 
    \begin{align}
        G(\x,\xi) = \bigwedge_{j =1}^{n-1} \partial_{\xi_j} \nabla_\x \phi(\x,\xi),
    \end{align}
    the matrix 
    \begin{align} \label{eq: H2}
        (G(\x,\xi) \cdot \nabla_\x)\nabla_\xi^2 \phi(\x,\xi)
    \end{align}
    is nondegenerate on $M \times \Sigma$. 
\end{enumerate} 
We say that (H2+) holds if the matrix \eqref{eq: H2} is further positive-definite, and we call $\phi$ a \emph{positive-definite} H\"ormander-type phase function in that case.
Let $a : \R^{n} \times \R^{n-1} \to \R$ be a smooth bump function supported on $M \times \Sigma$. We define the H\"ormander-type oscillatory integral operator associated to $\phi$ by 
\begin{align}
    T_\phi^\lambda f(\x) = \int_{\R^{n-1}} e^{i \lambda \phi(\x/\lambda,\xi)} f(\xi) a(\x/\lambda,\xi) d\xi.
\end{align}
A central open problem is to determine the range of $p$ for which 
\begin{align}\label{eq: op boundedness}
    \|T_\phi^\lambda f\|_p \leq C_{p,\eps} \lambda^\eps \|f \|_\infty.
\end{align}
This was posed by H\"ormander in the 1970s and is often called H\"ormander's oscillatory integral problem \cite{HormanderOriginal}. See also \cite{HormanderDichotomy} for an exposition of this problem. Writing $\x = (x,t) \in \R^{n-1} \times \R$ and defining 
\begin{align}
    \phi_{n,\rest}(\x,\xi) = x \cdot \xi + \tfrac{1}{2}t|\xi|^2, \quad (0_M, 0_\Sigma) = (0,0),
\end{align}
$T_{\phi_{n,\rest}}^\lambda$ is the Fourier extension operator for the paraboloid. In that case we obtain the Fourier restriction problem in dimension $n$, and the conjectured range for \eqref{eq: op boundedness} is $p > \frac{2n}{n-1}$. 
If \eqref{eq: op boundedness} holds for the phase function
\begin{align}
    \phi_{n,\BR}(\x,\xi) = t^{-1} \sqrt{1 + |x - t \xi|^2}, \quad (0_M,0_\Sigma)=((0,1),0)
\end{align}
for all $p > \tfrac{2n}{n-1}$,
it is known that the Bochner-Riesz conjecture holds in dimension $n$. See \cite[Section 2]{BRoldapproachrevisited} for an explanation of the Bochner-Riesz conjecture and this reduction. 
These examples begin to demonstrate the importance of the following more refined question.
\begin{question}\label{question: best-case Hormander}
    For which H\"ormander-type phase functions $\phi$ does $T_\phi^\lambda$ satisfy \eqref{eq: op boundedness} for all $p > \frac{2n}{n-1}$?
\end{question}

We will often write \emph{phase function} to mean H\"ormander-type phase function. 
As shown by H\"ormander, the answer to Question \ref{question: best-case Hormander} is all phase functions in the $n = 2$ case \cite{HormanderOriginal}. However Bourgain proved that \eqref{eq: op boundedness} fails for a \emph{generic} phase function in the $n = 3$ case \cite{BourgainSeveralVariables}. This construction was later generalized to all $n \geq 3$ by Guo--Wang--Zhang \cite{HormanderDichotomy}: if $\phi$ is a phase function such that for some $(\x,\xi) \in M \times \Sigma$,
\begin{align}\label{eq: failing bourgain}
    (G(\x,\xi) \cdot \nabla_\x)^2 \nabla_\xi^2\phi(\x,\xi) \text{ is not a multiple of } (G(\x,\xi) \cdot \nabla_\x) \nabla_\xi^2 \phi(\x, \xi),
\end{align}
then \eqref{eq: op boundedness} fails for all 
\begin{align}
    p < \frac{2n}{n-1} + \frac{2}{(n-1)(2n^2 - n - 2)}.
\end{align}
Going forward we assume that $n \geq 3$. Motivated by these examples, Guo--Wang--Zhang introduced \emph{Bourgain's condition}. 
\begin{definition}[Bourgain's condition]\label{def: Bourgain cond original}
    Let $\phi$ be a H\"ormander-type phase function in $\R^n$. Then $\phi$ satisfies Bourgain's condition if there exists a (necessarily smooth) function $\lambda : M \times \Sigma \to \R$ such that on $M \times \Sigma$,
    \begin{align}\label{eq: B cond original}
        (G(\x,\xi) \cdot \nabla_\x)^2 \nabla_\xi^2 \phi(\x,\xi) = \lambda(\x,\xi) (G(\x, \xi) \cdot \nabla_\x) \nabla_\xi^2 \phi(\x,\xi). 
    \end{align}
\end{definition}
The phase functions $\phi_{n,\rest}$ and $\phi_{n, \BR}$ notably satisfy Bourgain's condition, and \eqref{eq: failing bourgain} holding at some point $(\x,\xi)$ is the statement that Bourgain's condition fails. This leads to the following conjecture proposed by Guo--Wang--Zhang to hopefully answer Question \ref{question: best-case Hormander}, and unify the Bochner--Riesz and Fourier restriction conjectures. 
\begin{conjecture}[H\"ormander dichotomy conjecture]\label{conj: Hormander dichotomy}
    The operators $T_\phi^\lambda$ satisfy \eqref{eq: op boundedness} for $p > \frac{2n}{n-1}$ if and only if $\phi$ satisfies Bourgain's condition.  
\end{conjecture}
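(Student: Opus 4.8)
The plan is to prove the two implications separately, with essentially all of the difficulty concentrated in the forward direction (Bourgain's condition $\Rightarrow$ sharp $L^p$ bounds). The reverse implication is already available in the literature: if \eqref{eq: B cond original} fails at a point then \eqref{eq: failing bourgain} holds there, and the Bourgain--Guo--Wang--Zhang construction produces an explicit $f$ — a sum of wave packets arranged along a degenerate family of curves — witnessing the failure of \eqref{eq: op boundedness} for $p$ below the stated threshold; so it suffices to treat a phase $\phi$ satisfying Bourgain's condition and show that $T_\phi^\lambda$ obeys the full restriction-type range. Since even the model case $\phi = \phi_{n,\rest}$ is the open restriction conjecture, the honest target is a \emph{transference statement}: Bourgain's condition should force $T_\phi^\lambda$ to satisfy exactly the same $L^p$ bounds as the extension operator for the paraboloid, so that Conjecture \ref{conj: Hormander dichotomy} becomes \emph{equivalent} to — rather than a consequence of — the restriction conjecture.

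First I would set up the standard reduction of \eqref{eq: op boundedness} to a curved-Kakeya estimate. One runs the broad--narrow decomposition together with polynomial partitioning (following Guth, Hickman--Rogers, and the partial results of Guo--Wang--Zhang \cite{HormanderDichotomy} obtained under Bourgain's condition), reducing the desired bound to the assertion that the \emph{curved $\delta$-tubes} attached to $\phi$ — images under the Hessian/Gauss-map parametrization of straight $\delta$-tubes — cannot be packed into a set of measure much smaller than straight $\delta$-tubes can be. The wave-packet tubes here are genuinely curved, and the role of Bourgain's condition is precisely to control their \emph{second-order} bending: \eqref{eq: B cond original} says that $(G(\x,\xi) \cdot \nabla_\x)^2 \nabla_\xi^2 \phi$ has no component transverse to $(G(\x,\xi) \cdot \nabla_\x) \nabla_\xi^2 \phi$, i.e. the family of curves does not twist at second order.

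The new input is a geometric translation of this: Bourgain's condition is equivalent to the statement that inside any $\delta^{1/2}$-tube, the curved $\delta$-tubes through it can be straightened — they agree, up to the $\delta$-scale error that the analysis already tolerates, with honest straight $\delta$-tubes. With this in hand one attacks the \emph{sticky} case following Wang--Zahl: a sticky curved Kakeya configuration, when one zooms into a $\delta^{1/2}$-tube and rescales, becomes a small perturbation of a sticky \emph{straight} Kakeya configuration, so a lower bound for sticky straight Kakeya sets — the sticky case of the classical Kakeya conjecture — self-improves through the induction on scales to the sticky curved statement, hence to the sticky case of \eqref{eq: op boundedness}. This is the portion actually carried out in the present paper.

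The main obstacle, and the reason the full conjecture stays open, is the passage from the sticky case to the general case. In the straight setting Wang--Zahl deduce general Kakeya from sticky Kakeya by an iteration exploiting that a tube is straight at \emph{every} scale. Here the analogous step would require the curved tubes to be straightenable not just inside a $\delta^{1/2}$-tube but inside larger tubes — equivalently, a diffeomorphism carrying the whole family of curves to lines. The examples constructed in this paper show exactly that this fails: there are phases satisfying Bourgain's condition for which no such diffeomorphism exists, so the straightening is genuinely local to scale $\delta^{1/2}$ and the Wang--Zahl iteration cannot be imported verbatim. Closing this gap — a general-to-sticky reduction that tolerates curvature reappearing at coarse scales — is where substantial new ideas are needed, and these examples are proposed as the natural starting point.
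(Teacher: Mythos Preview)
The statement you were asked to prove is Conjecture~\ref{conj: Hormander dichotomy}, which the paper presents as an \emph{open conjecture}, not a theorem. The paper does not prove it; it proves only partial results on the Kakeya-side analogue (Conjecture~\ref{conj: Bourgain Kakeya}), specifically the sticky case via Theorem~\ref{thm: sticky reduction formal}. There is no proof in the paper to compare your proposal against.

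Your proposal is not a proof either, and to your credit you say so explicitly (``the full conjecture stays open''). What you have written is an accurate summary of the paper's strategy and contributions---the local straightening inside $\delta^{1/2}$-tubes, the sticky reduction, and the obstruction coming from the $\tan$-example---together with the known reverse implication from \cite{HormanderDichotomy}. As a roadmap it is faithful to the paper; as a proof it is, by your own account, incomplete.

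One substantive overstatement is worth flagging. You write that the broad--narrow and polynomial-partitioning machinery ``reduces the desired bound'' \eqref{eq: op boundedness} to a curved Kakeya packing statement. No such reduction is known at the full conjectural range. Kakeya-type estimates are \emph{necessary} for \eqref{eq: op boundedness} (this is the Khintchine argument the paper cites), and the methods of \cite{HormanderDichotomy, wangwurestriction} convert \emph{some} incidence input into oscillatory-integral bounds, but there is no argument deriving the full range $p > \tfrac{2n}{n-1}$ for $T_\phi^\lambda$ from even the strongest curved Kakeya conjecture. So even if your transference heuristic and the sticky-to-general step were both in hand, the forward implication of Conjecture~\ref{conj: Hormander dichotomy} would not yet follow: you would have Conjecture~\ref{conj: Bourgain Kakeya}, not Conjecture~\ref{conj: Hormander dichotomy}. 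The paper is careful to keep these separate.
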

Guo--Wang--Zhang proved, via an involved calculation, that Bourgain's condition is invariant under diffeomorphism in $\x$ and $\xi$ separately---this is promising since the estimate \eqref{eq: op boundedness} is also invariant under such diffeomorphisms. Finding a geometric explanation for this invariance is one of the goals of our work. 
In support of Conjecture \ref{conj: Hormander dichotomy}, Guo--Wang--Zhang proved strong oscillatory integral estimates for phase functions satisfying Bourgain's condition, using a polynomial partitioning argument. Precisely, if $\phi$ satisfies Bourgain's condition and (H2+), then \eqref{eq: op boundedness} holds for 
\begin{align}
    p > 2 + \frac{2.5921}{n} + O(n^{-2}).
\end{align}
At the time of its publication, this gave the best bounds for the restriction conjecture in high dimensions. Both this positive result when Bourgain's condition is satisfied, and the negative result when Bourgain's condition fails, rely on the geometry of the wavepackets of $T^\lambda_\phi$. In other words, they rely on the properties of certain Kakeya sets of curves. 

\subsection{The wavepacket geometry of $T^\lambda_\phi$ and Kakeya sets of curves}

\begin{definition}[$\phi$-curves and $(\delta,\phi)$-tubes]
    Let $\phi$ be a H\"ormander-type phase function. 
    \begin{itemize}
        \item  We define 
    a $\phi$-curve $\ell_{\xi,v}$ by
    \begin{align}
        \ell_{\xi,v} = \{\x \in M :\nabla_\xi \phi(\x,\xi) = v\}.
    \end{align}
    After possibly shrinking $M$ and $\Sigma$, the condition (H1) guarantees that $\ell_{\xi,v}$ is a curve for $(\xi,v) \in \Sigma \times \V$, where $\V := \nabla_\xi\phi(M,\Sigma)\subset \R^{n-1}$ is an open set. We also define the origin $0_\V := \nabla_\xi \phi(0_M, 0_\Sigma)\in \V$, which has the property that $0_M \in \ell_{0_\Sigma, 0_\V}$. We define the $2(n-1)$-parameter family of $\phi$-curves 
    \begin{align}
        \mathcal C(\phi) := \{\ell_{\xi,v} : (\xi,v) \in \Sigma \times \V\}.
    \end{align}
    \item A $(\delta,\phi)$-tube $T_{\xi,v}^\delta \subset M$ is the $\delta$-neighborhood of the $\phi$-curve $\ell_{\xi,v}$. This curve is called the \emph{coaxial curve} of $T_{\xi,v}^\delta$. When the parameter $\delta$ is clear from context, we call these $\phi$-tubes. 
    \end{itemize}
\end{definition}

The $\phi_{n,\rest}$-curves are the lines $\mathrm{line}_{\xi,v} := (v,0) - \R(\xi,1)$. We will always refer to the $\phi_{n,\rest}$-curve with parameter $(\xi,v)$ by $\mathrm{line}_{\xi,v}$. We will call $\phi_{n,\rest}$-tubes \emph{straight tubes} and write them as $T_{\mathrm{line},\xi,v}^{\delta}$.
In analogy with this special case, the parameter $\xi \in \Sigma$ will be called the \emph{direction}. As a consequence of (H2), the parameter $\xi$ is really the correct generalization of direction: if $\ell_{\xi,v}$ and $\ell_{\xi',v'}$ intersect at a point, then $\angle(\ell_{\xi, v}, \ell_{\xi', v'}) \sim |\xi - \xi'|$. Given $\ell = \ell_{\xi,v}$, we define $\dir(\ell) = \xi$. 

The counterexamples of Bourgain and Guo--Wang--Zhang arose from special configurations of $\phi$-tubes, parts of which can compress into a surface. On the other hand when $\phi$ satisfies Bourgain's condition, Guo--Wang--Zhang proved that $\phi$-tubes satisfy a strong form of the \emph{polynomial Wolff axioms} \cite[Theorem 6.2]{HormanderDichotomy}. This is the only part of the proof of their positive result for $T_\phi^\lambda$ \cite[Theorem 1.3]{HormanderDichotomy} that used Bourgain's condition. 

More recently, Wang--Wu put forward a powerful technique to convert Kakeya-type incidence estimates to restriction type estimates, which has led to the state of the art for the restriction conjecture in all dimensions \cite{wangwurestriction}. To reach the full restriction conjecture, one likely needs a more complete understanding of the incidence geometry of straight tubes, and a first step is to understand the Kakeya conjecture---a compact subset of $\R^n$ containing a line segment in every direction has Hausdorff dimension $n$. Wang--Zahl have recently made spectacular progress in this direction by solving the $n = 3$ case of the Kakeya conjecture \cite{wangZahlstickyKakeya,WangZahlAssouadKakeya,wangZahlKakeya}. Our understanding of H\"ormander-type oscillatory integrals in dimension $n \geq 3$ follows the same paradigm: one can often convert progress on certain Kakeya-type problems for curves associated to $\phi$ to progress on H\"ormander's oscillatory integral problem (see \cite{HormanderDichotomy, beyondUniversalEstimates, BourgainSeveralVariables, DaiOscillatory, GuthSharpOscillatory} for examples).

To avoid any artificial issues in studying a Kakeya set whose curves lie near (resp. the parameters of whose curves lie near) the boundary of $M$ (resp. $\Sigma$ or $\V$), we fix $M_0 \subset M$ and $\Sigma_0 \subset \Sigma$ closed balls containing $0_M$ and $0_\Sigma$ respectively. We also set $\V_0 = \nabla_\xi \phi(M_0, \Sigma_0)$. 

\begin{definition}[$\phi$-Kakeya set]
    A compact set $K \subset M_0$ is a $\phi$-Kakeya set if for each $\xi \in \Sigma_0$, there is a $v \in \V_0$ such that $\ell_{\xi, v} \cap M_0 \subset K$. 
\end{definition}
The $\phi_{n,\rest}$-Kakeya sets are standard Kakeya sets in $\R^n$, after harmlessly taking a union with $O(1)$ many rotated copies of itself. It is a classical consequence of Khintchine's inequality that $\phi$-Kakeya sets have dimension $n$ if \eqref{eq: op boundedness} hold for $p > \frac{2n}{n-1}$ (one may follow the arguments in \cite[Proposition 5.7]{DemeterBook} to see this). Given Conjecture \ref{conj: Hormander dichotomy}, we are led to the following conjecture for $\phi$-Kakeya sets.

\begin{conjecture}[Curved Kakeya conjecture for Bourgain's condition]\label{conj: Bourgain Kakeya}
    If $\phi$ satisfies Bourgain's condition, then $\phi$-Kakeya sets have Hausdorff dimension $n$. 
\end{conjecture}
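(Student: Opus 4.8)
The plan is to separate the two independent difficulties hidden in the statement: the \emph{combinatorial structure} of a general $\phi$-Kakeya set, and the \emph{curvature} of the family $\mathcal C(\phi)$. The leverage against curvature should be a geometric reformulation of Bourgain's condition along the following lines: it holds if and only if, for every $(\phi,\delta^{1/2})$-tube $\mathbb{T}$, the family of $(\phi,\delta)$-tubes contained in $\mathbb{T}$ becomes comparable to a family of straight $\delta^{1/2}$-tubes inside a straight unit tube, after the affine map that rescales $\mathbb{T}$ to unit size followed by a \emph{bounded-distortion change of variables that is allowed to depend on $\mathbb{T}$ and on $\delta$}. (This per-tube, per-scale flexibility is essential: the paper's examples show one cannot hope for a single global diffeomorphism straightening $\mathcal C(\phi)$.) Granting such a characterization, I would prove the conjecture in three steps: (i) reduce a general $\phi$-Kakeya set to the \emph{sticky} case; (ii) in the sticky case, transfer the problem one $(\phi,\delta^{1/2})$-tube at a time to the classical Kakeya problem via the characterization; (iii) close an induction on scales and invoke the (sticky) classical Kakeya theorem.

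For step (i), a sticky $\phi$-Kakeya set is one whose $\delta$-tubes organize coherently: at scale $\delta^{1/2}$ they cluster into $\approx \delta^{-(n-1)/2}$ essentially disjoint fat $(\phi,\delta^{1/2})$-tubes, each a bush carrying $\approx \delta^{-(n-1)/2}$ of the thin tubes, and this persists down the dyadic scales. As in Wang--Zahl's treatment of the straight case one wants a dichotomy: either the set is multiscale sticky, or it branches enough that a bush/X-ray estimate already beats the trivial dimension bound and can be bootstrapped. This is where I expect the genuine obstacle to be. The Wang--Zahl graininess and sticky-reduction machinery is calibrated to straight tubes, and---precisely because, as this paper shows, the regularity of $\phi$-tubes exploited in step (ii) fails across triples of scales---one cannot simply run the reduction scale-by-scale. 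A new argument, presumably using that the breakdown of regularity at scale-triples is itself structured, would be required.

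For step (ii), assume stickiness. Each fat tube $\mathbb{T}$ is, at its own scale, essentially a straight $\delta^{1/2}$-tube; by (H2) the directions $\dir(\ell)$ of the thin curves $\ell\subset\mathbb{T}$ lie in a cap of radius $\approx\delta^{1/2}$, and stickiness forces these caps to cover $\Sigma_0$ efficiently as $\mathbb{T}$ varies---so the fat tubes themselves form, morally, a $\phi$-Kakeya configuration at scale $\delta^{1/2}$. Inside a single $\mathbb{T}$, apply the rescaling and change of variables from the geometric characterization: the thin $(\phi,\delta)$-tubes become straight $\delta^{1/2}$-tubes with $\delta^{1/2}$-separated, spanning directions, i.e.\ a classical Kakeya configuration at scale $\delta^{1/2}$. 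Hence the rescaled image of $K\cap\mathbb{T}$ contains a classical $\delta^{1/2}$-Kakeya set.

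For step (iii), feed both scales into the classical sticky Kakeya theorem (Wang--Zahl in $n=3$, conjecturally for all $n\ge 3$): it controls the $\delta^{1/2}$-scale configuration of the fat tubes and, separately, the rescaled straight configuration inside each $\mathbb{T}$, each time giving essentially dimension $n$ at scale $\delta^{1/2}$. The standard two-scale relation for the $\phi$-Kakeya maximal function then combines these into a self-improving inequality whose fixed point is Hausdorff dimension $n$, with a crude bound handling the base case. The only new analytic ingredient beyond the cited theorems is the geometric characterization used in step (ii); steps (ii)--(iii) together are essentially the paper's sticky-curved-to-sticky-classical reduction, so that---modulo the general-to-sticky reduction of step (i)---Bourgain's condition implies the curved Kakeya conjecture, unconditionally in dimension $3$.
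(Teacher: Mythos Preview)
This statement is \conjref{conj: Bourgain Kakeya} and is \emph{open}; the paper does not prove it, and your proposal does not either. You yourself flag step~(i)---the general-to-sticky reduction for $\phi$-Kakeya sets---as the place where ``a new argument\ldots would be required,'' and that is exactly the situation: this reduction is not carried out in the paper (nor elsewhere), and the $\tan$-example is introduced precisely to exhibit the obstruction. The Wang--Zahl reduction for lines leans on organizing tubes into planar grains across several scales, and \propref{prop: tan example curved} shows that for a phase satisfying Bourgain's condition the straightening guaranteed by \thmref{thm: geometric characterization of Bourgain} can already fail between scales $\delta$ and $\delta^{1/4}$. So step~(i) is not a routine adaptation but a genuinely open problem, singled out in \subsecref{subsec: future work} as future work.

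Your steps (ii)--(iii) are essentially the content of \thmref{thm: sticky reduction formal} and \lemref{lem: inductive step}, with one correction worth noting. At the coarse scale the fat tubes are still $\phi$-tubes, so you cannot ``feed both scales into the classical sticky Kakeya theorem.'' The paper instead runs an induction on scales in which the coarse-scale input is the \emph{curved} hypothesis $\SK'(\phi,\eps,\eta,\delta^{1/2})$, built up from the trivial base case of \lemref{lem: base case}, while only the fine-scale estimate inside each straightened fat tube invokes the classical $\SK'(\phi_{n,\rest},\cdot,\cdot,\delta^{1/2})$. With that adjustment your (ii)--(iii) match the paper's argument; but without a proof of~(i) the proposal does not establish the conjecture, and even the $n=3$ case remains open beyond the sticky situation.
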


\begin{remark}
    One hopes that Conjecture \ref{conj: Bourgain Kakeya} holds for a stronger version of dimension known as a maximal function estimate (see \cite[Section 5]{DemeterBook}), and possibly for further refined incidence questions analogous to those for straight tubes in \cite{wangwurestriction}. We are hopeful that such estimates would lead to direct progress on Conjecture \ref{conj: Hormander dichotomy}, though there might still be other technical challenges.
    If $\phi$ fails Bourgain's condition, the corresponding maximal function version of the $\phi$-Kakeya conjecture is false, as the proof of the negative result in \cite{HormanderDichotomy} shows, but it is unclear whether the conjecture should hold for Hausdorff dimension, of course barring certain worst-case examples (see \cite{GuthSharpOscillatory} for a discussion of such examples). 
    We do not pursue these questions in this work. 
\end{remark}

So far, the best partial progress toward Conjecture \ref{conj: Bourgain Kakeya} is dimension $\geq 2 + \frac{1}{3}$ for $n = 3$ and dimension $\geq (2-\sqrt{2})n + O(1)$ for general $n \geq 3$. These are both a consequence of the result that $\phi$-tubes satisfy strong polynomial Wolff axioms when $\phi$ satisfies Bourgain's condition, and a polynomial partitioning argument from \cite{HickmanRogersZhang} (see also the discussion in \cite[Section 2]{DaiOscillatory}). These estimates match the state of the art for the classical Kakeya conjecture in high dimensions (by either \cite{KatzTaohighdimkakeya} or \cite{HickmanRogersZhang}, which both give the numerology $(2-\sqrt{2})n + O(1)$). In low dimensions, however, the understanding of Conjecture \ref{conj: Bourgain Kakeya} falls short of the classical Kakeya conjecture. 
Notably in dimension $3$ we don't even have an analogue of the classical bound of $5/2$ due to Wolff \cite{wolffHairbrush}. For more discussion of adapting Wolff's hairbrush argument to curves in a slightly different context, see \cite{nadjimzadah2025newcurvedkakeyaestimates}.

\subsection{Geometric characterization of Bourgain's condition}

We find a new purely geometric characterization of Bourgain's condition which is transparently invariant with respect to diffeomorphisms in $\x$ and $\xi$ separately. The characterization quickly leads to 
a reduction from a sticky version of Conjecture \ref{conj: Bourgain Kakeya} in dimension $n$ to the classical sticky Kakeya conjecture in dimension $n$ (the informal version of this is Theorem \ref{thm: sticky reduction informal} and the formal version is Theorem \ref{thm: sticky reduction formal}).

Stated informally, $\phi$ satisfies Bourgain's condition if and only if for each $(\delta^{1/2},\phi)$-tube $T_0$, the family of $(\delta,\phi)$-tubes contained in $T_0$ is diffeomorphic to a family of straight $\delta$-tubes in a straight $\delta^{1/2}$-tube, while preserving the directions up to diffeomorphism. Thus we found an explicit connection between standard Kakeya sets and $\phi$-Kakeya sets when $\phi$ satisfies Bourgain's condition. Given a curve $\gamma \subset \R^n$, we write $\gamma + O(r)$ to denote the $O(r)$ neighborhood of $\gamma$ below. 
\begin{theorem}[Geometric characterization of Bourgain's condition]\label{thm: geometric characterization of Bourgain}
    A H\"ormander-type phase function $\phi$ satisfies Bourgain's condition if and only if there exist 
    smooth maps $F_{\xi_0,v_0} : M \to \R^n$, $\Xi_{\xi_0,v_0} : \Sigma \to \R^{n-1}$, and $V_{\xi_0,v_0} : \Sigma \times \V \to \R^{n-1}$ depending smoothly on $(\xi_0,v_0) \in \Sigma \times \V$, satisfying the following: 
    \begin{itemize}
        \item The map $F_{\xi_0,v_0} : M \to \R^n$ is a local diffeomorphism.
        \item The map $(\Xi_{\xi_0,v_0}, V_{\xi_0,v_0}) : \Sigma \times \V \to \R^{n-1} \times \R^{n-1}$ is a local diffeomorphism (equivalently, $\Xi_{\xi_0,v_0}$ is a local diffeomorphism and $\nabla_v V_{\xi_0,v_0}$ is invertible). 
        \item Fix $(\xi_0,v_0) \in \Sigma \times \V$ and let $F = F_{\xi_0, v_0}$, $\Xi = \Xi_{\xi_0,v_0}$, and $V = V_{\xi_0, v_0}$. Then 
        \begin{align}\label{eq: bourgain characterization}
            F(\ell_{\xi,v}) \subset \mathrm{line}_{\Xi(\xi), V(\xi,v)} + O(|(\xi,v) - (\xi_0,v_0)|^2)
        \end{align}
        on $\Sigma \times \V$. In particular, if $|(\xi,v)-(\xi_0,v_0)| \leq \delta^{1/2}$, then $F(\ell_{\xi,v} + O(\delta))$ is comparable to $\mathrm{line}_{\Xi(\xi), V(\xi,v)} + O(\delta)$.
    \end{itemize}
\end{theorem}

\begin{remark}\label{remark: data normalizations}
    By harmless transformations of $F_{\xi_0,v_0},\Xi_{\xi_0,v_0},V_{\xi_0,v_0}$, we may assume that 
    \begin{align}
        \Xi_{\xi_0,v_0}(\xi_0) = V_{\xi_0,v_0}(\xi_0,v_0) = 0.
    \end{align}
    This will be seen in the proof of Theorem \ref{thm: geometric characterization of Bourgain} and will be used later. 
\end{remark}

\begin{remark}\label{remark: invertibility follows}
    Once $F_{\xi_0,v_0}$ and $\Xi_{\xi_0,v_0}$ are local diffeomorphisms and \eqref{eq: bourgain characterization} holds, it follows automatically that $\nabla_v V_{\xi_0,v_0}$ is invertible. To see this, we will not use the hypothesis that $\nabla_v V_{\xi_0,v_0}$ is invertible in the proof of the reverse direction of Theorem \ref{thm: geometric characterization of Bourgain}, and we will instead derive it from the other hypotheses. 
\end{remark}

\begin{figure}[!h]
    \centering
    \includegraphics[width=.95\linewidth]{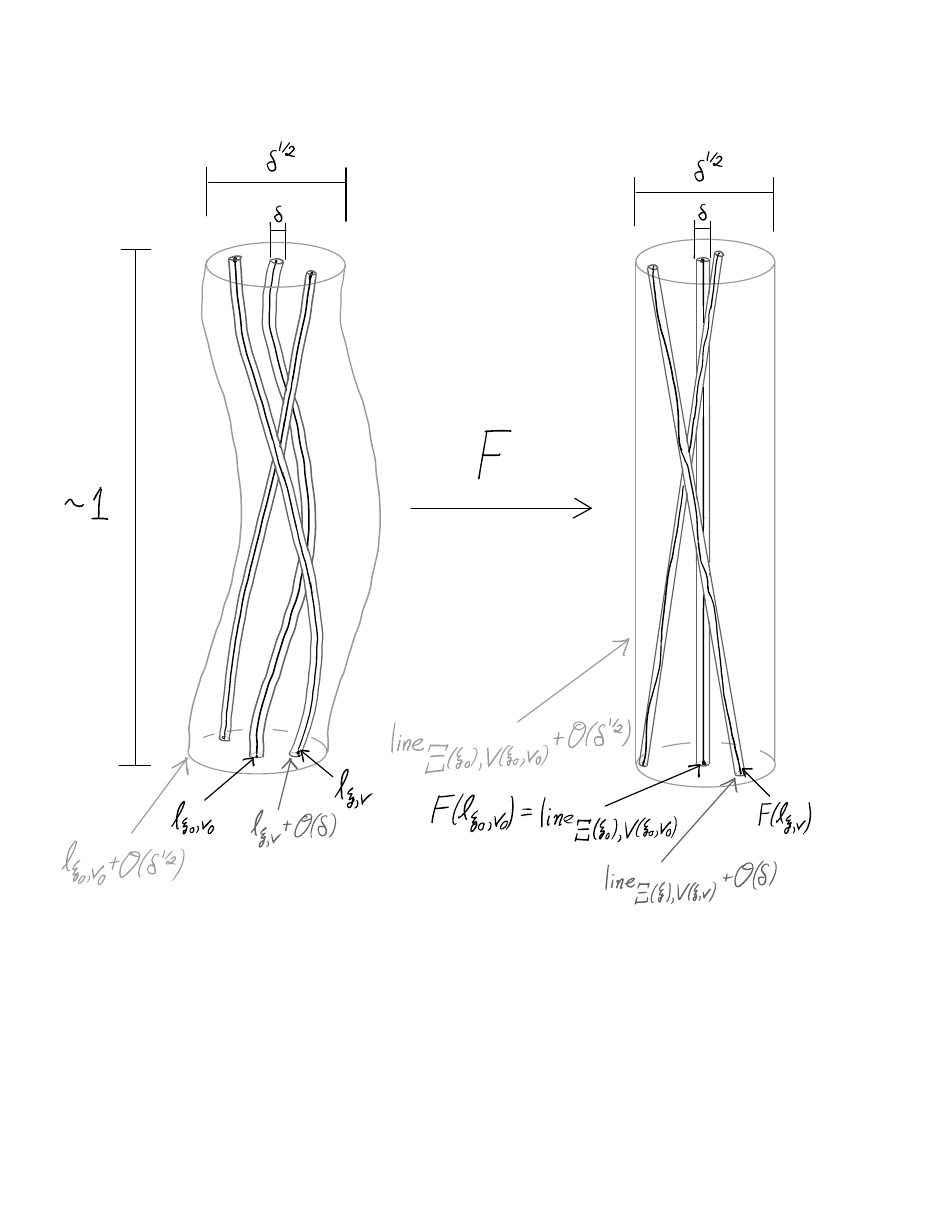}
    \caption{Cartoon of Theorem \ref{thm: geometric characterization of Bourgain}.}
    \label{fig: geometric characterization}
\end{figure}

Figure \ref{fig: geometric characterization} is a cartoon of Theorem \ref{thm: geometric characterization of Bourgain}. 
Theorem \ref{thm: geometric characterization of Bourgain} also has the following consequence. If $\phi$ satisfies Bourgain's condition, then the tubular neighborhood of any $\phi$-curve $\ell_{\xi,v}$ can be put into a normal form via $F = F_{\xi,v}$, in which the $2(n-1)$-parameter family of nearby $\phi$-curves can be simultaneously approximately straightened. For a general phase function $\phi$ that doesn't satisfy Bourgain's condition, one can't expect that more than an $(n-1)$-parameter subfamily of $\phi$-curves can be simultaneously straightened. For example, straightening the $(n-1)$-parameter subfamily of $\phi$-curves in a fixed direction $\xi_0 \in \Sigma$ uses up essentially all the degrees of freedom of a diffeomorphism in $\x$, and so does straightening the $(n-1)$-parameter subfamily of $\phi$-curves passing through a point $\x_0 \in M$.

We can see the diffeomorphism invariance in $\x$ and $\xi$ of our new characterization of Bourgain's condition quickly. Let $\phi$ be a phase function satisfying Bourgain's condition with corresponding $\phi$-curves $\ell_{\xi,v}$, and let $G : M' \to M$ and $H : \Sigma' \to \Sigma$ be diffeomorphisms. Define the transformed phase function $\tilde \phi : M' \times \Sigma' \to \R$ by $\tilde \phi(\x,\xi) = \phi(G(\x), H(\xi))$. Then the $\tilde \phi$-curves $\tilde \ell_{\xi,v}$ transform like 
\begin{align}
    \tilde \ell_{\xi,v} &= G^{-1}(\ell_{H(\xi), \nabla H(\xi)v}).
\end{align}
Thus the geometric characterization holds with $F_{\xi_0,v_0}$ replaced by $F_{\xi_0,v_0} \circ G$, $\Xi_{\xi_0,v_0}$ replaced by $\Xi_{\xi_0,v_0} \circ H$, and $V_{\xi_0,v_0}$ replaced by $(\xi,v) \mapsto V_{\xi_0,v_0}(H(\xi),\nabla H(\xi)v)$. We will introduce examples which show that Theorem \ref{thm: geometric characterization of Bourgain} cannot be strengthened to have an error $O(|(\xi,v) - (\xi_0,v_0)|^4)$.

\begin{remark}
    The diffeomorphisms $\Xi_{\xi_0,v_0}$ in $\xi$ are essential in Theorem \ref{thm: geometric characterization of Bourgain}. Bourgain found the example 
    \begin{align}
        \phi_{\mathrm{worst}}(\x,\xi) = x_1\xi_1 + x_2\xi_2 + t\xi_1 \xi_2 + \tfrac{1}{2} t^2 \xi_2^2, \quad (0_M, 0_\Sigma) =(0,0)\in \R^{3} \times \R^2,
    \end{align}
    which satisfies H\"ormander's conditions, fails Bourgain's condition, and in fact gives the worst case bounds for H\"ormander's problem \cite{BourgainSeveralVariables} (see \cite{GuthSharpOscillatory, worstCaseArbSig} for a study of this phenomenon). The $\phi_{\mathrm{worst}}$-curves are 
    \begin{align}
        \ell_{\xi,v} = \{(v_1 - t\xi_2, v_2 - t\xi_1 - t^2 \xi_2,t) : |t| \leq 1\}.
    \end{align}
    The diffeomorphism $F(\x) = (x_1,x_2 - tx_1, t)$ takes the curves $\ell_{\xi,v}$ to lines,
    \begin{align}
        F(\ell_{\xi,v}) \subset \mathrm{line}_{(\xi_2,\xi_1+v_1), v},
    \end{align}
    but crucially $(\xi,v) \mapsto (\xi_2,\xi_1 + v_1)$ is \emph{not} a diffeomorphism in $\xi$ alone. 
\end{remark}

The following mild reformulation of Bourgain's condition is important in proving Theorem \ref{thm: geometric characterization of Bourgain}. It will also be crucial in constructing Example \ref{ex: tan example} which we will introduce shortly. 
\begin{proposition}[Mild reformulation of Bourgain's condition]\label{prop: mild reformulation Bourgain}
    Let $\phi : M \times \Sigma \to \R$ be a H\"ormander-type phase function satisfying Bourgain's condition. Then there exist a smooth scalar valued function $c : M \times \Sigma \to \R$ and two smooth symmetric-matrix-valued functions $A,B : \V \times \Sigma \to \mathrm{Sym}_{n-1}(\R)$ such that 
    \begin{align}\label{eq: ABc equation}
        \nabla_\xi^2 \phi(\x,\xi) = A(\nabla_\xi \phi(\x,\xi), \xi) + c(\x,\xi)B(\nabla_\xi \phi(\x,\xi), \xi)
   \end{align}
   on $M \times \Sigma$. Additionally $B(v,\xi)$ is a non-degenerate matrix for each $(v,\xi) \in \V \times \Sigma$ and $(G(\x,\xi) \cdot \nabla_\x)c(\x,\xi) \neq 0$ on $M \times \Sigma$.  
   
   Conversely, suppose that $\phi : M \times \Sigma \to \R$ is a smooth function satisfying (H1). Suppose there are smooth $c : M \times \Sigma \to \R$, $A : \V \times \Sigma \to \R^{(n-1)\times(n-1)}$, and $B : \V \times \Sigma \to \R^{(n-1)\times(n-1)}$ such that \eqref{eq: ABc equation} holds,
   \begin{itemize}
       \item $B(v,\xi)$ is nondegenerate (resp. positive-definite), and 
       \item $(G(\x,\xi) \cdot \nabla_\x) c(\x,\xi) \neq 0$.
   \end{itemize}
   Then $\phi$ satisfies (H2) (resp. (H2+)) and Bourgain's condition.
\end{proposition}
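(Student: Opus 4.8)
The plan is to work entirely with the relation (H2)-type identities for $\nabla_\xi^2\phi$ along $\phi$-curves, treating $\nabla_\xi \phi$ as a new coordinate. The key observation is that along a $\phi$-curve $\ell_{\xi,v}$ — i.e. on the level set $\{\nabla_\xi \phi(\x,\xi) = v\}$ with $\xi$ fixed — the directional derivative $(G(\x,\xi)\cdot \nabla_\x)$ is (up to a nonzero scalar) the tangent derivative along the curve, since $G$ spans the tangent line to $\ell_{\xi,v}$ by (H1). Thus $(G\cdot\nabla_\x)\nabla_\xi^2\phi$ and $(G\cdot\nabla_\x)^2\nabla_\xi^2\phi$ measure the first and second derivatives of the matrix-valued function $s \mapsto \nabla_\xi^2\phi$ restricted to the curve.

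\textbf{Forward direction.} I would start from \eqref{eq: B cond original}. Fix $\xi$ and restrict attention to one $\phi$-curve. Since $\nabla_\xi\phi$ is constant along the curve by construction of $\ell_{\xi,v}$... wait — actually $\nabla_\xi\phi = v$ is constant along $\ell_{\xi,v}$ only in the components being leveled; one must be careful. The cleaner route: by (H1), for fixed $\xi$ the map $\x \mapsto \nabla_\xi\phi(\x,\xi)$ has rank $n-1$, so it is a submersion onto $\V$; its level sets are exactly the curves $\ell_{\xi,\cdot}$, and $G(\x,\xi)$ is the null direction. Now introduce, for each fixed $\xi$, new coordinates on $M$ in which $v = \nabla_\xi\phi(\cdot,\xi)$ is $n-1$ of the coordinates and the curve parameter $s$ is the last. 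In these coordinates $(G\cdot\nabla_\x)$ becomes $\partial_s$ (after rescaling by a nonvanishing smooth function, which I can absorb). Then \eqref{eq: B cond original} reads $\partial_s^2 H = \lambda\, \partial_s H$ where $H(s,v,\xi) := \nabla_\xi^2\phi$ expressed in these coordinates. This is a first-order linear ODE in $\partial_s H$: $\partial_s(\partial_s H) = \lambda \partial_s H$, so $\partial_s H(s,v,\xi) = \mu(s,v,\xi)\, \partial_s H(0,v,\xi)$ for a scalar integrating factor $\mu = \exp\int_0^s \lambda$ with $\mu > 0$, $\mu(0,\cdot)=1$. Integrating once more in $s$:
\begin{align}
H(s,v,\xi) = H(0,v,\xi) + c(s,v,\xi)\, \partial_s H(0,v,\xi),
\end{align}
where $c(s,v,\xi) = \int_0^s \mu$. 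Setting $A(v,\xi) := H(0,v,\xi)$ (the value of $\nabla_\xi^2\phi$ at the "basepoint" of the curve through $v$ in direction $\xi$) and $B(v,\xi) := \partial_s H(0,v,\xi)$ gives \eqref{eq: ABc equation} after translating $c$ back to a function $c(\x,\xi)$ via the coordinate change. Then $\partial_s c = \mu \ne 0$ gives $(G\cdot\nabla_\x)c \ne 0$ up to the nonvanishing rescaling factor. Nondegeneracy of $B$: since $B = \partial_s H(0,\cdot) = (G\cdot\nabla_\x)\nabla_\xi^2\phi$ at the basepoint (up to a nonzero scalar), and $(G\cdot\nabla_\x)\nabla_\xi^2\phi$ is exactly the matrix \eqref{eq: H2} appearing in (H2), nondegeneracy of $B$ is precisely condition (H2) — note (H2) holds at all points, in particular the basepoints. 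The only subtlety is checking that $A$ and $B$ genuinely depend only on $(v,\xi)$ and not on $s$: by construction they are the $s=0$ data, hence functions of $(v,\xi)$ alone, but one must verify the basepoint map $v \mapsto \x_0(v,\xi)$ is smooth in $(v,\xi)$ jointly, which follows from smoothness of the coordinate change and the implicit function theorem.

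\textbf{Converse.} Given \eqref{eq: ABc equation} with $B$ nondegenerate (resp. positive-definite) and $(G\cdot\nabla_\x)c \ne 0$, I differentiate along $G\cdot\nabla_\x$. Since $A$ and $B$ depend on $\x$ only through $v = \nabla_\xi\phi(\x,\xi)$, and $(G(\x,\xi)\cdot\nabla_\x)\nabla_\xi\phi(\x,\xi) = 0$ (because $G$ is the null direction of $\nabla_\x\nabla_\xi\phi$ by definition of the Gauss map as the wedge of the columns $\partial_{\xi_j}\nabla_\x\phi$), we get $(G\cdot\nabla_\x)[A(\nabla_\xi\phi,\xi)] = 0$ and likewise for $B$. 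Hence
\begin{align}
(G\cdot\nabla_\x)\nabla_\xi^2\phi = \big((G\cdot\nabla_\x)c\big)\, B(\nabla_\xi\phi,\xi),
\end{align}
which is nondegenerate (resp. positive-definite) since it is a nonzero scalar (resp. positive scalar, once one checks the sign — one may need to observe $(G\cdot\nabla_\x)c$ has a definite sign on the connected $M\times\Sigma$, or flip the sign of $G$, which is allowed since $G$ is defined up to scaling) times $B$; this is exactly (H2) (resp. (H2+)). Applying $(G\cdot\nabla_\x)$ once more:
\begin{align}
(G\cdot\nabla_\x)^2\nabla_\xi^2\phi = \big((G\cdot\nabla_\x)^2 c\big)\, B(\nabla_\xi\phi,\xi) = \frac{(G\cdot\nabla_\x)^2 c}{(G\cdot\nabla_\x)c}\,(G\cdot\nabla_\x)\nabla_\xi^2\phi,
\end{align}
so \eqref{eq: B cond original} holds with $\lambda = (G\cdot\nabla_\x)^2 c / (G\cdot\nabla_\x)c$, which is smooth since the denominator is nonvanishing. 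That establishes Bourgain's condition.

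\textbf{Main obstacle.} The delicate point is the bookkeeping in the forward direction: making precise the "straightening" coordinate change that turns $G\cdot\nabla_\x$ into $\partial_s$ \emph{uniformly in $\xi$}, tracking how the nonvanishing rescaling scalar enters $c$, and verifying that the resulting $A$, $B$ are functions of $(v,\xi)$ alone with the claimed smoothness — in particular that the "basepoint" of each curve can be chosen smoothly (this is where one shrinks $M,\Sigma$ as the paper reserves the right to do). Everything else is a short computation once one recognizes \eqref{eq: B cond original} as a linear ODE in the $s$-variable for the matrix $\nabla_\xi^2\phi$ along each curve.
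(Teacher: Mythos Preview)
Your proposal is correct and follows essentially the same route as the paper: straighten the vector field $G(\cdot,\xi)$ so that Bourgain's condition \eqref{eq: B cond original} becomes the linear ODE $\partial_s^2 H = \lambda\,\partial_s H$ along each curve, integrate twice to obtain $H = A + cB$ with $A,B$ the $s=0$ data, and read off nondegeneracy of $B$ and $(G\cdot\nabla_\x)c\neq 0$ from (H2); the converse is the same two applications of $G\cdot\nabla_\x$ to \eqref{eq: ABc equation}. The only cosmetic difference is that the paper invokes the flow-box theorem to make $G(\cdot,\xi_0)=e_n$ \emph{exactly}, then observes $\nabla_\xi\phi$ is independent of $x_n$ and changes the remaining $n-1$ coordinates to $v$; this sidesteps your ``rescaling'' remark (which is harmless anyway, since if $G\cdot\nabla_\x = f\partial_s$ then $(G\cdot\nabla_\x)^2 H = \lambda(G\cdot\nabla_\x)H$ still reduces to $\partial_s^2 H = \tilde\lambda\,\partial_s H$ for a modified $\tilde\lambda$).
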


\begin{example}[Proposition \ref{prop: mild reformulation Bourgain} for the restriction and Bochner--Riesz examples]
    The phase functions $\phi_{n,\rest}$ and $\phi_{n,\BR}$ can be written in the form \eqref{eq: ABc equation} by making the following choices of $A,B,c$:
    \begin{itemize}
        \item $\phi_{n,\rest}$: 
        \begin{align}
            A(v,\xi) &= 0, \\
            B(v,\xi) &= I_{n-1}, \\
            c(\x,\xi) &= t.
        \end{align}
        \item $\phi_{n,\BR}$:
        \begin{align}
            A(v,\xi) &= 0, \\
            B_{ij}(v,\xi) &= \begin{cases}
                1-v_i^2, & i=j \\
                -v_i v_j, & i \neq j
            \end{cases},\\
            c(\x,\xi) &= \frac{t}{\sqrt{1 + |x - t\xi|^2}}.
        \end{align}
    \end{itemize}
\end{example}

\subsection{Sticky $\phi$-Kakeya sets, and a reduction to classical sticky Kakeya sets under Bourgain's condition}

It was demonstrated in the series of landmark papers of Wang--Zahl \cite{wangZahlstickyKakeya, WangZahlAssouadKakeya, wangZahlKakeya} that the sticky Kakeya conjecture is true in $\R^3$, and that the Kakeya conjecture in $\R^3$ can be reduced to this sticky case. 
It is expected that the sticky Kakeya conjecture is true in higher dimensions, and that a general to sticky reduction is plausible in higher dimensions (see the discussion in \cite[Section 9]{guth2025introductionproofkakeyaconjecture}). 
Roughly speaking, a Kakeya set is sticky if it has a self-similar structure. 
This idea can be captured in different ways. In \cite{wangZahlstickyKakeya}, a Kakeya set $K \subset \R^n$ is said to be sticky if its set of (direction separated) lines has packing dimension $n-1$. 
However the general to sticky reduction in \cite{wangZahlKakeya} requires one to study sets of tubes which are not necessarily direction separated, but instead only satisfy the Katz-Tao or Frostman convex Wolff axioms. 
Correspondingly, a version of the sticky Kakeya conjecture in $\R^3$ involving these conditions is needed (see \cite[Theorem 7.3]{guth2026streamlinedproofkakeyaset}), building off \cite{wangZahlstickyKakeya,WangZahlAssouadKakeya}. 

It seems plausible that the sticky case is also the enemy case for Conjecture \ref{conj: Bourgain Kakeya}. 
We formalize stickiness for $\phi$-Kakeya sets by generalizing the definition in \cite{wangZahlstickyKakeya}, since it is clear how to write it in general dimensions. Though at least in $\R^3$, a sticky $\phi$-Kakeya theorem corresponding to Frostman \emph{polynomial} Wolff axioms is true. We very briefly discuss and sketch this in Section \ref{subsec: future work}.

To measure how closely $\phi$-curves pack together, we equip $\mathcal C(\phi)$ with the metric 
\begin{align}
    d(\ell_{\xi,v},\ell_{\xi',v'}) = |\xi-\xi'| + |v - v'|.
\end{align}
In the special case of $\phi = \phi_{n,\rest}$, this agrees with the metric on lines used in \cite{wangZahlstickyKakeya} up to bi-Lipschitz equivalence. 
We are led to the following generalization of classical sticky Kakeya sets from \cite{wangZahlstickyKakeya} to sticky $\phi$-Kakeya sets.

\begin{definition}[Sticky $\phi$-Kakeya set]
    A compact set $K \subset M_0$ is called a sticky $\phi$-Kakeya set if there is a set of $\phi$-curves $L \subset \mathcal C(\phi)$ with packing dimension $n-1$ that contains at least one curve for each direction $\xi \in \Sigma_0$, so that $\ell \cap M_0 \subset K$ for each $\ell \in L$. 
    More generally, a $\phi$-Kakeya set is $\eta$-close to being sticky if the packing dimension of $L$ is at most $n-1 + \eta$. 
\end{definition}

A classical sticky Kakeya set is then the same as a sticky $\phi_{n, \rest}$-Kakeya set. 
We are led to the sticky case of Conjecture \ref{conj: Bourgain Kakeya}.
\begin{conjecture}[Curved sticky Kakeya conjecture for Bourgain's condition]\label{conj: sticky Bourgain kakeya}
		If $\phi$ is a phase function in $\R^n$ satisfying Bourgain's condition, then sticky $\phi$-Kakeya sets have Hausdorff dimension $n$.
\end{conjecture}
By using Theorem \ref{thm: geometric characterization of Bourgain} and a simple induction on scales argument, we reduce Conjecture \ref{conj: sticky Bourgain kakeya} in dimension $n$ to the classical sticky Kakeya conjecture in dimension $n$. 
\begin{theorem}[(Informal) Classical sticky Kakeya implies curved sticky Kakeya for Bourgain's condition]\label{thm: sticky reduction informal}
		If classical sticky Kakeya sets in $\R^n$ have dimension $n$, then for all phase functions $\phi$ in $\R^n$ satisfying Bourgain's condition, sticky $\phi$-Kakeya sets have dimension $n$. 
        In particular, Conjecture \ref{conj: sticky Bourgain kakeya} is true when $n=3$. 
\end{theorem}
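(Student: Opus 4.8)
The plan is to transfer a sticky $\phi$-Kakeya set of dimension $< n$ to a sticky classical Kakeya set of dimension $< n$, using Theorem \ref{thm: geometric characterization of Bourgain} at every scale in a multiscale (induction-on-scales) scheme. Suppose $K \subset M_0$ is a sticky $\phi$-Kakeya set with associated family $L \subset \mathcal C(\phi)$ of packing dimension $n-1$, hitting every direction $\xi \in \Sigma_0$. Fix a small scale $\delta$; cover the parameter space $\Sigma \times \V$ by $\sim \delta^{-(n-1)/2}$ balls of radius $\delta^{1/2}$ centered at points $(\xi_0, v_0)$, so the corresponding $(\phi,\delta^{1/2})$-tubes $T_0 = T_{\xi_0,v_0}^{\delta^{1/2}}$ cover $M_0$ with bounded overlap. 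For each such $T_0$, apply Theorem \ref{thm: geometric characterization of Bourgain}: the diffeomorphism $F = F_{\xi_0,v_0}$ straightens every $\phi$-curve $\ell_{\xi,v}$ with $|(\xi,v)-(\xi_0,v_0)| \lesssim \delta^{1/2}$ to within $O(|(\xi,v)-(\xi_0,v_0)|^2) = O(\delta)$ of the line $\mathrm{line}_{\Xi(\xi), V(\xi,v)}$, and $(\xi,v) \mapsto (\Xi(\xi), V(\xi,v))$ is a local diffeomorphism preserving the direction structure (in the sense that $\Xi$ acts on $\xi$ alone). Thus inside $T_0$, the portion of $K$ consisting of the $O(\delta)$-neighborhoods of $\phi$-curves through $T_0$ is mapped by $F$ into an $O(\delta)$-neighborhood of a genuine family of straight $\delta$-tubes, direction-separated in the same way, i.e. into (a rescaled copy of) a straight Kakeya set at scale $\delta$ inside a unit ball.

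Next I would run this comparison across scales. Writing $\delta = \delta_0^{2^k}$ for a dyadic sequence $\delta_k = \delta_0^{2^k}$, one straightens at scale $\delta_0$, then inside each resulting $\delta_0^{1/2}$-tube straightens again at scale $\delta_1 = \delta_0^2$, and so on. The composition of the diffeomorphisms $F_{\xi_0,v_0}$ along this nested tower of tubes yields, at scale $\delta_k$, a bi-Lipschitz map (with constants uniform in $k$, since each $F$ is a diffeomorphism on a fixed domain and we only use it on a $\delta_k^{1/2}$-ball where it is $\delta_k^{1/2}$-close to affine) that sends the scale-$\delta_k$ structure of $K$ to the scale-$\delta_k$ structure of a classical Kakeya set. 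The quadratic error $O(|(\xi,v)-(\xi_0,v_0)|^2)$ at each stage is precisely what makes the errors summable: at stage $k$ the error is $O(\delta_k) = O(\delta_{k-1}^2)$, negligible compared with the tube radius $\delta_{k-1}^{1/2}$ one has zoomed into, so no degradation accumulates. The packing-dimension hypothesis on $L$ is preserved because $(\Xi, V)$ is bi-Lipschitz on $\mathcal C(\phi)$ with the metric $d$, which is bi-Lipschitz to the line metric of \cite{wangZahlstickyKakeya}; hence the image family of lines also has packing dimension $n-1$, i.e. the image is again a \emph{sticky} Kakeya set. Applying the hypothesis that classical sticky Kakeya sets in $\R^n$ have dimension $n$ to (a union of $O(1)$ rotated copies of) the image, together with the fact that a bi-Lipschitz map does not change Hausdorff dimension, gives $\dim_H K = n$. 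The $n=3$ case then follows from the Wang--Zahl theorem \cite{wangZahlstickyKakeya, WangZahlAssouadKakeya}.

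The main obstacle I expect is bookkeeping the multiscale straightening \emph{coherently}: one needs the diffeomorphisms at different scales, and in overlapping $\delta^{1/2}$-tubes at the same scale, to be compatible enough that the global object they build is a single set to which the classical statement applies, rather than a scale-dependent family of incompatible models. Concretely, after straightening at scale $\delta_0$ one must check that the image of $K \cap T_0$ is not merely \emph{near} a straight Kakeya set but can be replaced by one with a controlled, direction-separated line family, and that the $O(1)$-overlap of the $T_0$'s turns into $O(1)$-overlap of the models; then at the next scale one must verify the hypotheses of Theorem \ref{thm: geometric characterization of Bourgain} still apply to the straightened-but-not-exactly-straight curves (this is where the \emph{quadratic} error, as opposed to merely $o(1)$, is essential — it lets one absorb the discrepancy into the admissible $O(\delta^2)$-thickening at the next stage without changing the dimension). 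A clean way to package this is to prove a single-scale lemma: \emph{if} at scale $r$ the set $K$ looks, inside every $(\phi,r^{1/2})$-tube, like an $O(r)$-thickened straight Kakeya set with a packing-dimension-$(n-1)$ line family, then the same holds at scale $r^2$ with $r$ replaced by $r^2$; iterating this lemma and taking $r \to 0$ along $\delta_0^{2^k}$ gives the theorem. Everything else — the bi-Lipschitz invariance of Hausdorff dimension, the preservation of packing dimension under the parameter diffeomorphism, and the reduction of $\phi_{n,\rest}$-Kakeya sets to classical Kakeya sets up to $O(1)$ rotations — is routine.
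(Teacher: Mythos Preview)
Your high-level plan --- straighten inside each $(\phi,\delta^{1/2})$-tube via Theorem \ref{thm: geometric characterization of Bourgain}, exploit that the quadratic error is absorbed at the next scale, and iterate along $\rho \mapsto \rho^2$ --- matches the paper's scheme. Where your proposal diverges, and where there is a real gap, is in what you do with the straightened pieces.

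You aim to assemble the local straightenings into a \emph{single} classical sticky Kakeya set (a ``global object'' to which the classical hypothesis applies), invoking bi-Lipschitz invariance of Hausdorff dimension at the end. This is precisely the step that cannot be made to work, and you already sense it: the diffeomorphisms $F_{\xi_0,v_0}$ for different $(\xi_0,v_0)$ are incompatible, and Proposition \ref{prop: tan example curved} (the $\tan$-example) shows that in general no single diffeomorphism sends $\mathcal C(\phi)$ to lines, even up to error $O(|(\xi,v)|^4)$. So the composed maps along different towers of nested tubes do not patch into a map on $K$; there is no candidate classical Kakeya set, and the bi-Lipschitz argument has no input. Your proposed single-scale lemma (``$K$ looks like a straight Kakeya set inside every $r^{1/2}$-tube at scale $r$'') inherits the same problem: the local straight models vary with the tube and never glue.

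The paper avoids this by making the inductive hypothesis the discretized \emph{$\phi$-volume bound} itself, not ``looks like a straight set.'' In the notation of Definition \ref{def: SK' assertion}, the inductive step is
\[
\SK'(\phi,\eps,\eta,\delta^{1/2}) \ \wedge\ \SK'(\phi_{n,\rest},\eta/10,\tilde\eta,\delta^{1/2}) \ \Longrightarrow\ \SK'(\phi,\eps,\tilde\eta/10,\delta).
\]
Given $(\T,Y)_\delta$, cover by $(\phi,\rho)$-tubes $\T_\rho$ with $\rho=\delta^{1/2}$. For each $T_\rho$, apply $F_{\xi_0,v_0}$ and a radial dilation so that the $\delta$-tubes in $\T[T_\rho]$ become essentially distinct straight $\rho$-tubes satisfying the sticky hypotheses; apply the classical assertion $\SK'(\phi_{n,\rest},\cdot,\cdot,\rho)$ \emph{inside this single fat tube} to get $|\bigcup_{T\in\T[T_\rho]} Y(T)| \gtrsim \rho^{n-1+\eta/9}$. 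This produces a shading $\tilde Y(T_\rho)=\bigcup_{T\in\T[T_\rho]}Y(T)$ on the $(\phi,\rho)$-tubes with good total mass, to which the inductive hypothesis $\SK'(\phi,\eps,\eta,\rho)$ applies directly --- no straightening needed at the coarse scale. Iterating $O(\log(1/\eps))$ times from a trivial base case at scale $\delta^{\eps/(10n)}$ gives $\SK(\phi,\eps,\eta_N,\delta_0)$, and Proposition \ref{prop: discretized implies dimension} converts this to the dimension statement. The classical hypothesis is thus only ever used \emph{locally}, one fat tube at a time; what propagates across scales is the $\phi$-estimate, so no global straight model is required.
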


To make the implication in Theorem \ref{thm: sticky reduction informal} formal, we need to introduce a discretization scheme and notation analogous to \cite{wangZahlstickyKakeya}. 

We will always assume going forward that a $(\delta,\phi)$-tube $T = T_{\xi,v}^\delta$ has $(\xi,v) \in \Sigma_0 \times \V_0$. 
A \emph{shading} of $T$ is a measurable subset $Y(T) \subset T \cap M_0$. Write $(\T,Y)_\delta$ for a collection of $(\delta,\phi)$-tubes and their associated shadings. 
We say two $(\delta,\phi)$-tubes $T_1=T^\delta_{\xi_1,v_1}$ and $T_2=T^\delta_{\xi_2,v_2}$ are \emph{essentially distinct} if $d(\ell_{\xi_1,v_1}, \ell_{\xi_2,v_2}) \geq \delta$. We say $T_1$ and $T_2$ are \emph{essentially parallel} if $|\dir(\ell_{\xi_1,v_1})-\dir(\ell_{\xi_2,v_2})| \leq \delta$ (we will omit \emph{essentially} for brevity when the context is clear).
Let $T$ be a $(\delta,\phi)$-tube and $\tilde T$ a $(\rho, \phi)$-tube with $\delta \leq \rho$. We say $\tilde T$ \emph{covers} $T$ if their coaxial curves $\ell, \tilde \ell$ satisfy $d(\ell, \tilde \ell) \leq c_{\mathrm{cover}} \rho$. We choose $c_{\mathrm{cover}} > 0$ here to be small enough that when $\delta \leq \rho/2$ and $\tilde T$ covers $T$, then $T \cap M_0 \subset \tilde T$ (such a constant is guaranteed to exist by the smoothness of $\phi$ and the compactness of $M_0, \Sigma_0, \V_0$).
If $\T$ is a collection of $(\delta,\phi)$-tubes and $\tilde \T$ is a collection of $(\rho,\phi)$-tubes, we say $\tilde \T$ covers $\T$ if each tube in $\T$ is covered by some tube in $\tilde \T$. Write $\T[\tilde T]$ for the set of tubes in $\T$ that are covered by $\tilde T$. 

We may now define the discretized version of sticky $\phi$-Kakeya sets, and what it means to have dimension $n$. 
\begin{definition}[Assertion $\SK(\phi, \eps, \eta, \delta_0)$]
\label{def: SK assertion}
    Fix a phase function $\phi$. We say that $\SK(\phi, \eps, \eta, \delta_0)$ holds if for all $\delta \in (0,\delta_0)$ and all pairs $(\T, Y)_\delta$ satisfying the properties
    \begin{enumerate}
        \item[(a)] the $(\delta,\phi)$-tubes in $\T$ are essentially distinct,
        \item[(b)] for each $\delta \leq \rho \leq 1$, $\T$ can be covered by a set of $(\rho,\phi)$-tubes, at most $\delta^{-\eta}$ of which are essentially parallel to a common tube, and
        \item[(c)] $\sum_{T \in \T} |Y(T)| \geq \delta^\eta$,
    \end{enumerate}
    one has 
    \begin{align}
        |\bigcup_{T \in \T} Y(T)| \geq \delta^\eps. 
    \end{align}
\end{definition}
The discretized sticky $\phi$-Kakeya conjecture may now be stated as follows.
\begin{conjecture}[Discretized sticky $\phi$-Kakeya conjecture]\label{conj: sticky phi kakeya discretized}
    For every $\eps > 0$ there exists $\eta = \eta_\phi(\eps) > 0$ and $\delta_{0} = \delta_{0,\phi}(\eps) > 0$ such that $\mathrm{SK}(\phi, \eps, \eta, \delta_0)$ holds. 
\end{conjecture}
The discretized classical sticky Kakeya conjecture in $\R^n$ is then Conjecture \ref{conj: sticky phi kakeya discretized} applied to $\phi_{n,\mathrm{rest}}$.
\begin{conjecture}[Discretized classical sticky Kakeya conjecture in $\R^n$]\label{conj: discr classical sticky Kakeya conj}
    For every $\eps > 0$ there exists $\eta = \eta_{n,\rest}(\eps) > 0$ and $\delta_0 = \delta_{0,n,\rest}(\eps) > 0$ such that $\SK(\phi_{n,\rest},\eps,\eta,\delta_0)$ holds. 
\end{conjecture}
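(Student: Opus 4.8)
The plan is to reduce Conjecture~\ref{conj: discr classical sticky Kakeya conj} to the \emph{non}-discretized classical sticky Kakeya conjecture in $\R^n$ --- that every sticky Kakeya set in $\R^n$ has Hausdorff dimension $n$ --- by establishing the standard equivalence between such a Hausdorff-dimension statement for Kakeya sets and its single-scale, shading-robust counterpart, carried out in the sticky setting. The implication ``discretized $\Rightarrow$ continuous'' is unconditional, and for $n=3$ the continuous hypothesis is the theorem of Wang--Zahl \cite{wangZahlstickyKakeya,WangZahlAssouadKakeya}, so Conjecture~\ref{conj: discr classical sticky Kakeya conj} holds unconditionally in dimension three; for $n\geq 4$ the following is a conditional reduction.

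First I would record the easy direction: if $\SK(\phi_{n,\rest},\eps,\eta,\delta_0)$ holds for all $\eps>0$ with appropriate $\eta(\eps),\delta_0(\eps)$, then every sticky Kakeya set has dimension $n$. Given such a set $K$ with line family $L$ of packing dimension $n-1$, write $L=\bigcup_i L_i$ with $\overline{\dim}_B L_i\leq n-1+\eta$ for each $i$; a finite sub-union still contains lines in a positive-measure set of directions, so for each small dyadic $\delta$ I can select one essentially distinct $\delta$-tube per $\delta$-ball covering that sub-union, take the shading $Y(T)=T$, and verify (a)--(c) of Definition~\ref{def: SK assertion}: (b) from the box-dimension bound, (c) from $K$ being Kakeya. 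The assertion then gives $|N_\delta(K)|\geq\delta^\eps$ for the $\delta$-neighborhood $N_\delta(K)$, and the standard argument deducing a Hausdorff-dimension lower bound from such single-scale content bounds (a two-ends reduction applied over a sparse sequence of scales) yields $\dim_H K\geq n-\eps$ for all $\eps$.

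The substantive direction, ``continuous $\Rightarrow$ discretized,'' I would prove by contradiction and a self-similar construction. If $\SK(\phi_{n,\rest},\eps_0,\eta,\delta_0)$ fails for a fixed $\eps_0>0$ and all $\eta,\delta_0$, extract $\delta_k\downarrow 0$, $\eta_k\downarrow 0$ and pairs $(\T_k,Y_k)_{\delta_k}$ obeying (a)--(c) with parameter $\eta_k$ but with $|\bigcup_{T\in\T_k}Y_k(T)|<\delta_k^{\eps_0}$. After a two-ends reduction replacing $Y_k(T)$ by a genuine sub-tube on a large fraction of the tubes --- preserving (a), the quantitative stickiness (b), and the measure bound up to constants --- I would iterate this configuration roughly $\log(1/\delta_j)/\log(1/\delta_k)$ times with random translations, following Bourgain's construction (see \cite{guth2025introductionproofkakeyaconjecture}), to build a compact Kakeya set $E\subset\R^n$. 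Its direction set fills $S^{n-1}$ since $\eta_k\to 0$; hypothesis (b) at every intermediate scale is inherited by the iterate, so the line set of $E$ has packing dimension $n-1$ and $E$ is sticky; and the single-scale bound $\delta_k^{\eps_0}$ propagates through the self-similar iteration to give $\dim_H E\leq n-\eps_0/2<n$, contradicting the hypothesis.

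I expect the main obstacle to be the stickiness bookkeeping in the last step. The classical ``single-scale counterexample yields a low-dimensional Kakeya set'' machinery does not by itself respect the packing-dimension-$(n-1)$ constraint on the lines, so the multiscale iteration must be organized so that the line sets at consecutive scales nest self-similarly --- precisely the delicate construction Wang--Zahl execute in dimension three \cite{wangZahlstickyKakeya,WangZahlAssouadKakeya} --- and one must absorb the $\eta_k$-losses by using the ``$\eta$-close to sticky'' form of the continuous conjecture. A secondary technical point is making the two-ends/sub-tube reduction compatible with hypothesis (b), which requires tracking how the number of essentially parallel $\rho$-tubes transforms under passing to sub-tubes and sub-collections. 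Granting these, the contradiction closes and $\SK(\phi_{n,\rest},\eps,\eta(\eps),\delta_0(\eps))$ holds for every $\eps>0$; for $n=3$ it is unconditional by Wang--Zahl.
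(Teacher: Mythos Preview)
The statement you are attempting to prove is labeled a \emph{conjecture} in the paper, and the paper does not supply a proof for general $n$. There is therefore no ``paper's own proof'' to compare your proposal against. What the paper does say is that the $n=3$ case is a theorem (Theorem~\ref{thm: wang zahl sticky}), and it explicitly states that \emph{the discretized statement $\SK(\phi_{3,\rest},\eps,\eta,\delta_0)$ is the main result of} \cite{wangZahlstickyKakeya}. In other words, Wang--Zahl prove the discretized assertion directly; it is not deduced from a prior continuous Hausdorff-dimension statement.

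This makes your overall strategy for $n=3$ circular. You propose to (i) assume the continuous sticky Kakeya theorem in $\R^3$, (ii) run a ``continuous $\Rightarrow$ discretized'' reduction via a Bourgain-type self-similar counterexample construction, and (iii) conclude $\SK(\phi_{3,\rest},\eps,\eta,\delta_0)$. But the continuous statement you want to invoke in step (i) is itself a \emph{corollary} of the discretized statement (this is precisely the content of Proposition~\ref{prop: discretized implies dimension}, and the paper's ``easy direction'' that you also sketch). So citing Wang--Zahl for the continuous version and then working back to the discretized version is a detour through a consequence of what Wang--Zahl actually proved. If you want the $n=3$ case, you should simply cite \cite{wangZahlstickyKakeya} for the discretized assertion, as the paper does.

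For $n\geq 4$ your proposal is, as you acknowledge, conditional. The reduction you sketch (failure of the discretized assertion $\Rightarrow$ existence of a sticky Kakeya set of dimension $<n$) is in the right spirit, and you have correctly identified the main obstacle: the self-similar iteration must be arranged so that the resulting line set still has packing dimension exactly $n-1$, and the $\eta_k$-losses must be absorbed. This is genuine work and is not carried out in the present paper; the paper simply leaves Conjecture~\ref{conj: discr classical sticky Kakeya conj} open for $n\geq 4$ and uses it as a black-box hypothesis in Theorem~\ref{thm: sticky reduction formal}. Your sketch is plausible but is not a proof, and in any case it only trades one open conjecture for another.
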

The main result of \cite{wangZahlstickyKakeya} can then be formulated as the following discretized statement. 
\begin{theorem}[Discretized classical sticky Kakeya in $\R^3$ \cite{wangZahlstickyKakeya}]\label{thm: wang zahl sticky}
    Conjecture \ref{conj: discr classical sticky Kakeya conj} is true in the case $n =3$. 
\end{theorem}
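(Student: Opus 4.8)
The plan is to recognize that \thmref{thm: wang zahl sticky}, namely \conjref{conj: discr classical sticky Kakeya conj} in the case $n=3$, is a discretized restatement of the main theorem of Wang--Zahl \cite{wangZahlstickyKakeya}, so I would first make the translation explicit and then run their argument. The translation is the same bookkeeping used to pass between the various formulations of the classical Kakeya conjecture (cf.\ \cite[Section~5]{DemeterBook}): hypothesis (a) of Definition~\ref{def: SK assertion} says the $\delta$-tubes are $\delta$-separated in parameter space, hypothesis (b) is precisely the discretized statement that the underlying set of lines has packing dimension at most $2+\eta$ at every dyadic scale (this is what makes the configuration sticky, or $\eta$-close to sticky), and hypothesis (c) together with the desired lower bound for $|\bigcup_{T} Y(T)|$ is a Kakeya-maximal-type inequality which, after dyadic pigeonholing of the shadings and summation over scales, is equivalent to the assertion that sticky Kakeya sets in $\R^3$ have Hausdorff dimension $3$. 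This step is routine; all the content lies in the geometric statement.

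Reduced to geometry, I would follow the Wang--Zahl strategy. The first move is to extract from hypothesis (b) the self-similar tree structure of a (nearly) sticky Kakeya configuration: for each dyadic $\rho\in[\delta,1]$ fix a cover of $\T$ by $O_\eta(\rho^{-2})$ essentially distinct $(\rho,\phi_{3,\rest})$-tubes with few pairwise essentially parallel members, note that these covers may be taken essentially nested, and observe that the $\delta$-tubes inside a fixed $\rho$-tube, rescaled by $\rho^{-1}$, again satisfy hypotheses of the same shape at scale $\delta/\rho$. This sets up an induction on scales in which the measure of $\bigcup_T Y(T)$ factors into a coarse contribution at scale $\rho$ and a fine contribution from the rescaled subproblems inside the $\rho$-tubes; the goal is to show this inequality self-improves and drives the exponent down to the sharp value.

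The hard part — and the genuinely new content of \cite{wangZahlstickyKakeya, WangZahlAssouadKakeya} — is that this induction does not close by itself: the obstruction is concentration. One must establish a structure theorem saying that a sticky Kakeya configuration in $\R^3$ which is not already essentially full-dimensional must, at some scale, be concentrated in the neighborhood of a low-degree algebraic surface (in the extreme case a plane), and then show that such planiness propagates badly through the self-similar structure of (b) and is in the end incompatible with covering all directions. The first half is the bush / hairbrush / planebrush circle of ideas going back to Wolff \cite{wolffHairbrush}, suitably refined and iterated; the second half is where the specific Wang--Zahl innovations enter, via their analysis of the Assouad dimension of sticky Kakeya sets \cite{WangZahlAssouadKakeya} together with rigidity inputs in the spirit of recent grains and $\mathrm{SL}_2$-Kakeya estimates. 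I would expect essentially all of the real difficulty to live in this structure theorem and its iteration, with the multi-scale scaffolding and the pigeonholing of shadings being routine.
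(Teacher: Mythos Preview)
The paper gives no proof of \thmref{thm: wang zahl sticky}: it is stated as a black-box citation to \cite{wangZahlstickyKakeya} (``The main result of \cite{wangZahlstickyKakeya} is then the following discretized statement''), and is used only as an input to \thmref{thm: sticky reduction formal}. So there is no ``paper's own proof'' to compare against.

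Your proposal is not really a proof but a high-level roadmap of the Wang--Zahl argument, and as such it is broadly accurate: you correctly identify that (a)--(c) of Definition~\ref{def: SK assertion} encode $\delta$-separation, packing-dimension $\le 2+\eta$ stickiness, and a density hypothesis on shadings respectively, and that the substance lies in a multi-scale structure theorem. A couple of small corrections to your sketch: the planiness/grains dichotomy in \cite{wangZahlstickyKakeya} is not quite ``concentration near a low-degree algebraic surface'' in the polynomial-partitioning sense, but rather a dichotomy between Katz--Tao convex-Wolff behavior and plany behavior, and the Assouad-dimension paper \cite{WangZahlAssouadKakeya} is logically downstream of \cite{wangZahlstickyKakeya} (it adapts the sticky argument to the convex-Wolff setting needed for the full Kakeya proof), not an input to it. But since the present paper treats \thmref{thm: wang zahl sticky} as a citation, the appropriate ``proof'' here is simply: this is \cite[Theorem~1.1]{wangZahlstickyKakeya} in discretized form, with the translation between the formulations being the routine bookkeeping you describe in your first paragraph.
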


Analogously to \cite{wangZahlstickyKakeya}, Conjecture \ref{conj: sticky phi kakeya discretized} implies the following Hausdorff dimension statement. Below $\dim_\H$ denotes Hausdorff dimension.
\begin{proposition}[Discretized to dimension statement] \label{prop: discretized implies dimension}
    Suppose that Conjecture \ref{conj: sticky phi kakeya discretized} holds for a phase function $\phi$ in $\R^n$. Then for all $\eps > 0$, there is $\eta > 0$ so that the following holds. Let $K$ be a $\phi$-Kakeya set that is $\eta$-close to being sticky. Then $\dim_\H K \geq n - \eps$. In particular, if $K$ is a sticky $\phi$-Kakeya set then $\dim_\H K = n$. 
\end{proposition}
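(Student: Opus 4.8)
The plan is to prove the contrapositive by feeding a near-optimal cover of $K$ into the discretized hypothesis. Fix $\eps > 0$; applying the hypothesis with exponent $\eps$ produces $\eta', \delta_0 > 0$ such that $\SK(\phi, \eps, \eta', \delta_0)$ holds, and we set $\eta := \eta'/100$. We will show that any $\phi$-Kakeya set $K$ which is $\eta$-close to being sticky has positive $(n-\eps)$-dimensional Hausdorff content $\mathcal{H}^{n-\eps}_\infty(K) > 0$, hence $\dim_\H K \geq n - \eps$; the statement about genuinely sticky sets follows at once, since such a set is $\eta$-close to sticky for \emph{every} $\eta > 0$.

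Suppose, for contradiction, that $\mathcal{H}^{n-\eps}_\infty(K) < \tau$ with $\tau$ a small constant to be fixed. By compactness $K$ has a finite cover by balls $B(x_j, r_j)$ with $\sum_j r_j^{n-\eps} < \tau$. Fix a small scale $\delta$ (depending on $\eps$, $\eta'$ and on $K$, through constants below); inflating each ball with $r_j < \delta$ and subdividing each with $r_j \geq \delta$ into $O((r_j/\delta)^n)$ balls of radius $\delta$, and using $r_j \leq 1$, we get a cover of $K$ by $N \lesssim \tau\delta^{-n}$ balls of radius $\delta$. Now let $L \subset \mathcal{C}(\phi)$ witness the $\eta$-closeness, so $\dim_p L \leq n - 1 + \eta$, and write $L = \bigcup_i L_i$ with $\overline{\dim}_B \overline{L_i} \leq n - 1 + 2\eta$. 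Since the sets of directions represented by $L_1 \cup \dots \cup L_m$ increase to $\Sigma_0$ as $m \to \infty$, continuity of Lebesgue outer measure gives an $m$ for which $\mathcal{L} := L_1 \cup \dots \cup L_m$ represents a set $D$ of directions with outer measure $|D| \geq |\Sigma_0|/2$; crucially $\mathcal{L}$ then satisfies a \emph{uniform} covering bound $N_\rho(\mathcal{L}) \leq C_K\,\rho^{-(n-1+2\eta)}$ for every $\rho \leq 1$, where $N_\rho$ is the covering number in the metric $d$ and $C_K$ may depend on $K$. For $\xi$ in a maximal $\delta$-separated subset of $D$, pick $v(\xi)$ with $\ell_{\xi, v(\xi)} \in \mathcal{L}$, set $T_\xi := T^\delta_{\xi, v(\xi)}$ and $Y(T_\xi) := T_\xi \cap \bigcup_j B(x_j, 2\delta)$, and let $\T$ collect these tubes. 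Property (a) holds by direction-separation. Since $\ell_{\xi, v(\xi)} \cap M_0 \subset K$ has length $\gtrsim 1$ (by the $\phi$-Kakeya normalization) and is covered by the $\delta$-balls, its $\delta$-neighborhood lies in $Y(T_\xi)$ and has measure $\gtrsim \delta^{n-1}$, so $\sum_\xi |Y(T_\xi)| \gtrsim |\Sigma_0| \geq \delta^{\eta'}$ once $\delta$ is small enough; this is property (c).

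The main obstacle is property (b): for \emph{every} dyadic $\rho \in [\delta, 1]$, $\T$ must be covered by $(\rho, \phi)$-tubes at most $\delta^{-\eta'}$ of which are pairwise essentially parallel. We cover $\T$ by the $\rho$-tubes through the central curves of a minimal $d$-cover of $\mathcal{L}$ at scale $\approx \rho$ (valid up to harmless dilation constants); the covering family has at most $C_K\,\rho^{-(n-1+2\eta)}$ members and, since $\mathcal{L}$ represents the directions in $D$, meets $\gtrsim \rho^{-(n-1)}$ distinct $\rho$-direction-classes, so all but at most a $C_K\,\rho^{\eta}$-fraction of those classes contain at most $\rho^{-3\eta}$ of the tubes. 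For the boundedly many coarse scales $\rho$ above a threshold $\rho_1 = \rho_1(\eta, C_K)$, the crude bound $N_\rho(\mathcal{L}) \leq N_{\rho_1}(\mathcal{L}) =: C_K'$ makes \emph{every} $\rho$-direction-class harmless once $\delta \leq (C_K')^{-1/\eta'}$. For the fine scales $\rho \leq \rho_1$, we discard every $T_\xi$ whose direction $\xi$ lies in a crowded $\rho$-class for some such $\rho$; choosing $\rho_1$ small enough in terms of $\eta$ and $C_K$ that $\sum_{\rho \leq \rho_1,\ \rho\text{ dyadic}} C_K\,\rho^{\eta} < \tfrac14$ ensures at most a quarter of $D$'s directions are lost, so $\gtrsim |\Sigma_0|\,\delta^{-(n-1)}$ tubes survive and (c) is preserved up to a constant. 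Since discarding tubes only helps (a), and $\rho^{-3\eta} \leq \delta^{-3\eta} \leq \delta^{-\eta'}$ on $[\delta, 1]$, the surviving collection satisfies (b) at every scale. I expect the delicate points to be this outer-measure-continuity step, which is what upgrades a mere packing-dimension hypothesis into a genuinely uniform box bound at the cost of only controlling a positive fraction of directions, and the summability of the per-scale losses in the pruning, which holds precisely because all the relevant fine scales lie below one threshold depending only on $\eta$ and $C_K$.

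Finally, applying $\SK(\phi, \eps, \eta', \delta_0)$ to the surviving pair $(\T, Y)_\delta$ (legitimate as $\delta < \delta_0$) gives $\big|\bigcup_\xi Y(T_\xi)\big| \geq \delta^{\eps}$. But $\bigcup_\xi Y(T_\xi) \subset \bigcup_j B(x_j, 2\delta)$ has measure $\lesssim N\delta^n$, so $N \gtrsim \delta^{\eps - n}$; together with $N \lesssim \tau\delta^{-n}$ this forces $\tau \gtrsim \delta^{\eps}$, and choosing $\tau$ below that threshold (possible once $\delta$ is fixed) is a contradiction. Hence $\mathcal{H}^{n-\eps}_\infty(K) \gtrsim \delta^{\eps} > 0$ and $\dim_\H K \geq n - \eps$. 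Since $\eps > 0$ was arbitrary and a sticky $\phi$-Kakeya set is $\eta$-close to sticky for every $\eta$, such sets satisfy $\dim_\H K = n$ (the bound $\dim_\H K \leq n$ being trivial).
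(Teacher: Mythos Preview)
Your overall strategy matches the paper's sketch: the extraction of $\mathcal L = L_1 \cup \dots \cup L_m$ with a uniform box-counting bound and positive direction measure is essentially Lemma~\ref{lem: sticky interp} (the $\mathrm{QStick}$ lemma), and your pruning argument for condition~(b) is a reasonable substitute for the fiberwise formulation used there. The gap is in your passage from Hausdorff content to a single-scale cover. You claim that inflating the balls with $r_j < \delta$ and subdividing those with $r_j \ge \delta$ produces $N \lesssim \tau\delta^{-n}$ balls of radius $\delta$; the subdivision is fine (since $r_j \le 1$ gives $r_j^n \le r_j^{n-\eps}$), but inflation contributes $\#\{j : r_j < \delta\}$ to $N$, and the hypothesis $\sum_j r_j^{n-\eps} < \tau$ gives no bound whatsoever on that count --- a cover witnessing small content may use arbitrarily many tiny balls. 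You cannot repair this by taking $\delta < \min_j r_j$, because then $\delta$ depends on the cover and hence on $\tau$, so your closing step ``choosing $\tau$ below that threshold (possible once $\delta$ is fixed)'' becomes circular.

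The standard fix, to which the paper defers via \cite{wangZahlstickyKakeya}, is a dyadic pigeonholing on the cover. Group the balls into $\mathcal B_k = \{j : r_j \sim 2^{-k}\}$; each curve $\ell \subset K$ satisfies $\sum_k 2^{-k}\,\#\{j \in \mathcal B_k : B_j \cap \ell \neq \emptyset\} \gtrsim 1$, so some scale $2^{-k(\ell)}$ carries a $\gtrsim k(\ell)^{-2}$ fraction of its length, and a further pigeonhole over directions selects a single $\delta = 2^{-k}$ at which a positive-measure set of directions is well covered. At that scale one has $|\mathcal B_k| \le \tau\, \delta^{-(n-\eps)}$ \emph{directly} from the content bound, giving $\bigl|\bigcup Y(T)\bigr| \lesssim |\mathcal B_k|\,\delta^n \le \tau\,\delta^{\eps}$; comparing with $\delta^\eps$ from $\mathrm{SK}$ now yields $\tau \gtrsim 1$, an absolute inequality with no $\delta$-dependence and hence no circularity. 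The logarithmic losses from pigeonholing are harmlessly absorbed into $\delta^{\eta'}$, and since the pigeonholed scale satisfies $\delta \le \max_j r_j < \tau^{1/(n-\eps)}$, taking $\tau$ small in terms of your $K$-dependent thresholds forces $\delta$ as small as needed for the rest of your argument.
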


We now state the formal version of Theorem \ref{thm: sticky reduction informal}.
\begin{theorem}[(Formal) Classical sticky Kakeya implies curved sticky Kakeya for Bourgain's condition]\label{thm: sticky reduction formal}
    Let $n \geq 3$ and suppose that $\phi$ is a phase function in $\R^n$ satisfying Bourgain's condition. 
    If Conjecture \ref{conj: discr classical sticky Kakeya conj} holds in $\R^n$, then Conjecture \ref{conj: sticky phi kakeya discretized} holds for $\phi$. 

	Combined with Theorem \ref{thm: wang zahl sticky} and Proposition \ref{prop: discretized implies dimension}, Conjecture \ref{conj: sticky Bourgain kakeya} is true in $\R^3$. 
\end{theorem}

Below we describe some sharpness examples for Theorem \ref{thm: geometric characterization of Bourgain}, which also demonstrate that a general to sticky reduction for Conjecture \ref{conj: Bourgain Kakeya} in the spirit of Wang--Zahl requires new ideas.

\subsection{Sharpness examples for Theorem \ref{thm: geometric characterization of Bourgain} and barriers to a general to sticky reduction}

For each $n \geq 3,$ we find a phase function $\phi$ in $\R^n$ satisfying Bourgain's condition, for which there does not exist a diffeomorphism taking the family of $\phi$-curves to lines. This answers a question of Gao--Liu--Xi \cite{gao2025curvedkakeyasetsnikodym} in the negative, and we first announced this example in the case $n = 3$ in \cite{nadjimzadah2025newcurvedkakeyaestimates} (though we did not prove anything about it there). 

\begin{example}[The $\tan$-example in $\R^n$]\label{ex: tan example}
    Fix $n \geq 3$ and write $\xi = (\xi', \xi_{n-1}) \in \R^{n-2} \times \R$, $\x = (x',x_{n-1},t) \in \R^{n-2} \times \R \times \R$. The $\tan$-example in $\R^n$ is the phase function 
    \begin{align}
        \phi_{n,\tan}(x,\xi) = x' \cdot \xi' + \tfrac{1}{2} t^2 |\xi'|^2 + \log(\sec(t \xi_{n-1} + x_{n-1})), \quad (0_M,0_\Sigma) = ((0,0,1),(0,0)).
    \end{align}
	This example is a positive-definite phase function satisfying Bourgain's condition. The corresponding $\phi_{n,\tan}$-curves are 
    \begin{align}
        \ell_{\xi,v} = \{(v' - t^2 \xi', \tan^{-1}(\frac{v_{n-1}}{t}) - t\xi_{n-1}, t) : |t-1| \leq 1/10\}.
    \end{align}
\end{example}
We call $\phi_{n,\tan}$ the $\tan$-example since the family of curves involves the function $\tan$. 
In Section \ref{sec: tan example}, we will construct this example using Proposition \ref{prop: mild reformulation Bourgain}. We then prove that there is no diffeomorphism from the $(\delta, \phi_{n,\tan})$-tubes in a fixed $(\delta^{1/4}, \phi_{n,\tan})$-tube to straight $\delta$-tubes in a straight $\delta^{1/4}$-tube. The $\tan$-example appears to be the simplest to write example satisfying Bourgain's condition with this property. 

\begin{proposition}[$\tan$-example is curved]\label{prop: tan example curved}
        For any fixed $\eps_0 > 0$, there does not exist a diffeomorphism $F : B_{\eps_0}(0_M) \to   \R^n$ onto its image and smooth $\Xi,V : \R^{n-1} \times \R^{n-1} \to \R^{n-1}$ such that 
        \begin{align}\label{eq: lines to error 4}
            F(\ell_{\xi,v} \cap B_{\eps_0}(0_M)) \subset \mathrm{line}_{\Xi(\xi,v), V(\xi,v)}+ O(|(\xi,v)|^4)
        \end{align}
        for $(\xi,v)$ near $(0,0)$. 
        In particular, the family of $\phi_{n,\tan}$-curves is not diffeomorphic to a family of lines. 
\end{proposition}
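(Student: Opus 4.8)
The plan is to argue by contradiction using a second-order jet computation: the hypothesis that $F$ straightens the curves up to quartic error forces an algebraic relation on the Taylor coefficients of $\phi_{n,\tan}$ at $(0_M,0_\Sigma)$ which, by the converse direction of Proposition~\ref{prop: mild reformulation Bourgain} applied in reverse (or rather by a direct contradiction with the $\log\sec$ term), cannot hold. Concretely, first I would expand $\ell_{\xi,v}$ for $(\xi,v)$ near $(0,0)$: parametrizing by $t$ near $1$ and writing $s = t-1$, the curve is $(v' - (1+s)^2\xi', \tan^{-1}(v_{n-1}/(1+s)) - (1+s)\xi_{n-1}, 1+s)$. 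The key structural point is that the $x_{n-1}$-component depends on $(v_{n-1},\xi_{n-1},t)$ in a way that is \emph{genuinely} curved — the quantity $\tan^{-1}(v_{n-1}/t)$ is affine in neither $v_{n-1}$ alone nor jointly with $t$, and its nonlinearity survives at the cubic order in $(\xi,v)$ after one subtracts off any affine-in-$(\xi,v)$, polynomial-in-$t$ family of lines. So I would compute the $3$-jet in $(\xi,v)$ (with coefficients that are functions of $t$) of the difference between $\ell_{\xi,v}$ and the most general possible $\mathrm{line}_{\Xi(\xi,v),V(\xi,v)}$, in the coordinates $F(\x)$.

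The second step is to pin down what $F$, $\Xi$, $V$ can be at the level of low-order jets. Since $F$ is a diffeomorphism, at first order in $(\xi,v)$ the inclusion \eqref{eq: lines to error 4} forces $dF$ at $0_M$ to carry the tangent directions of the $\ell_{\xi,v}$ (a $2(n-1)$-parameter family of directions along the curve) to the corresponding tangent directions of the lines, which determines $dF|_{0_M}$ and the linearizations of $\Xi$ and $V$ up to the obvious normalizations. At second order, the inclusion up to $O(|(\xi,v)|^4)$ is very rigid: it says that the \emph{quadratic} part of the $(\xi,v)$-jet of $F(\ell_{\xi,v})$ must already lie on a line, i.e. the $t$-dependence of that quadratic part must be affine in $t$ (after reparametrizing $t$, which amounts to precomposing the curve with a diffeomorphism — harmless since lines are closed under affine reparametrization). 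This is exactly the condition that is the infinitesimal form of Bourgain's condition "to one higher order," and it is what $\phi_{n,\tan}$ fails. I would isolate the obstruction in a single scalar: the coefficient of $v_{n-1}^2 t$ (or $v_{n-1}^3$, whichever is cleanest) in the $x_{n-1}$-component, coming from the Taylor expansion $\tan^{-1}(v_{n-1}/t) = v_{n-1}/t - \frac{1}{3}(v_{n-1}/t)^3 + \cdots$, is a rational (non-polynomial, non-affine) function of $t$ that cannot be absorbed into $dF$ applied to an affine-in-$(\xi,v)$ line-family no matter how $F$ is chosen, because $dF|_{0_M}$ is a fixed linear map and the allowed reparametrizations of $t$ form only a finite-dimensional family acting on the $t$-coefficients.

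The last step is the bookkeeping: write the contradiction as "there is no affine map $(\xi,v)\mapsto(\Xi,V)$ and no $t$-reparametrization and no linear $dF|_{0_M}$ making the quartic-order error vanish," then observe this already fails at a \emph{single} well-chosen pair $(\xi,v)$ curve (or a one-parameter subfamily in the $\xi_{n-1},v_{n-1}$ variables), so the general diffeomorphism $F$ on $B_{\eps_0}(0_M)$ — whose $2$-jet at $0_M$ is all that enters — cannot exist. The "in particular" statement follows since a global straightening diffeomorphism would in particular satisfy \eqref{eq: lines to error 4} with zero error, a fortiori with $O(|(\xi,v)|^4)$ error. I expect the main obstacle to be purely computational hygiene: keeping the many indices straight when expanding a $2(n-1)$-parameter family to third order and correctly identifying which monomials in $(\xi,v)$ are "free" (absorbable into $\Xi$, $V$, or the $t$-reparametrization) versus "rigid" (constrained by the fixed linear map $dF|_{0_M}$); the actual non-polynomiality of $\tan^{-1}$ makes the final contradiction clean once that accounting is done correctly. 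A clean way to organize this is to project everything onto the single $x_{n-1}$-coordinate, where the $x'$ and $t$ directions decouple from the genuinely curved behavior, reducing the whole problem to a one-dimensional statement about the function $(v_{n-1},\xi_{n-1},t)\mapsto \tan^{-1}(v_{n-1}/t)-t\xi_{n-1}$ not being straightenable to quartic order — a $2$-variable ($+t$) computation that can be done by hand.
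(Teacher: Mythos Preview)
Your plan has a genuine gap in how it accounts for the degrees of freedom of $F$. You write that only ``the $2$-jet at $0_M$'' of $F$ enters, and that one need only rule out a ``linear $dF|_{0_M}$'' together with a $t$-reparametrization. But the curves $\ell_{\xi,v}$ for small $(\xi,v)$ fill out a tubular neighborhood of the entire curve segment $\ell_{0,0}\cap B_{\eps_0}(0_M)$, which has macroscopic length $\sim\eps_0$ in the $t$-direction. Consequently the Taylor expansion of $F(\ell_{\xi,v}(t))$ in $(\xi,v)$ involves the transverse jets of $F$ \emph{along all of} $\ell_{0,0}$, as functions of $t$. These are infinitely many free parameters: the $k$-th order term in $(\xi,v)$ sees the $k$-th transverse derivatives of $F$ at each point $\ell_{0,0}(t)$, and these can be prescribed arbitrarily (subject only to $F$ being a diffeomorphism). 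So the cubic-in-$v_{n-1}$, $t$-dependent term you isolate can in principle be absorbed by a suitable third-order transverse jet of $F$ along $\ell_{0,0}$; your proposed ``one-dimensional'' reduction to the $x_{n-1}$-coordinate also fails because a general $F$ mixes all coordinates. To make a direct jet argument work you would have to track all of these freedoms simultaneously and show a residual obstruction survives---this is not the ``hand computation'' you suggest.

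The paper sidesteps this entirely by testing a property that is manifestly diffeomorphism-invariant. It considers the one-parameter family of curves $\tilde\ell_{\xi(s),v(s)}$ passing through both $\ell_{0,0}$ and a fixed point $\mathbf{p}\notin\ell_{0,0}$, and looks at their tangent directions $\gamma(s)$ at the single point $\mathbf{p}$. If $F$ sent these curves to lines, the images would be lines through a fixed line and a fixed point, hence coplanar, forcing $\gamma(s)$ to lie in a $2$-plane for all $s$. This only involves $dF|_{\mathbf{p}}$, a single linear map, so the infinite-dimensional freedom in $F$ evaporates. The paper then computes $\gamma(1)\wedge\dot\gamma(1)\wedge\ddot\gamma(1)\neq 0$ directly (this is where the $v_{n-1}^3/3t^3$ term from $\tan^{-1}$ actually shows up), giving the contradiction. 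The ``in particular'' clause follows as you say.
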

We will prove Proposition \ref{prop: tan example curved} by showing that the union of curves through $\ell_{0,0}$ and a point $\mathbf{p} \notin \ell_{0,0}$ is not contained in a surface, while of course a union of lines passing through a fixed line and point is contained in a plane. In \cite{nadjimzadah2025newcurvedkakeyaestimates} this type of behavior is known as \emph{coniness}, where it was exploited to give positive results for Kakeya sets of curves in a slightly different context. Figure \ref{fig: tan failure diagram} is a cartoon of this failure of \eqref{eq: lines to error 4} for $\phi_{n,\tan}$.

\begin{figure}[!h]
    \centering
    \includegraphics[width=.66\linewidth]{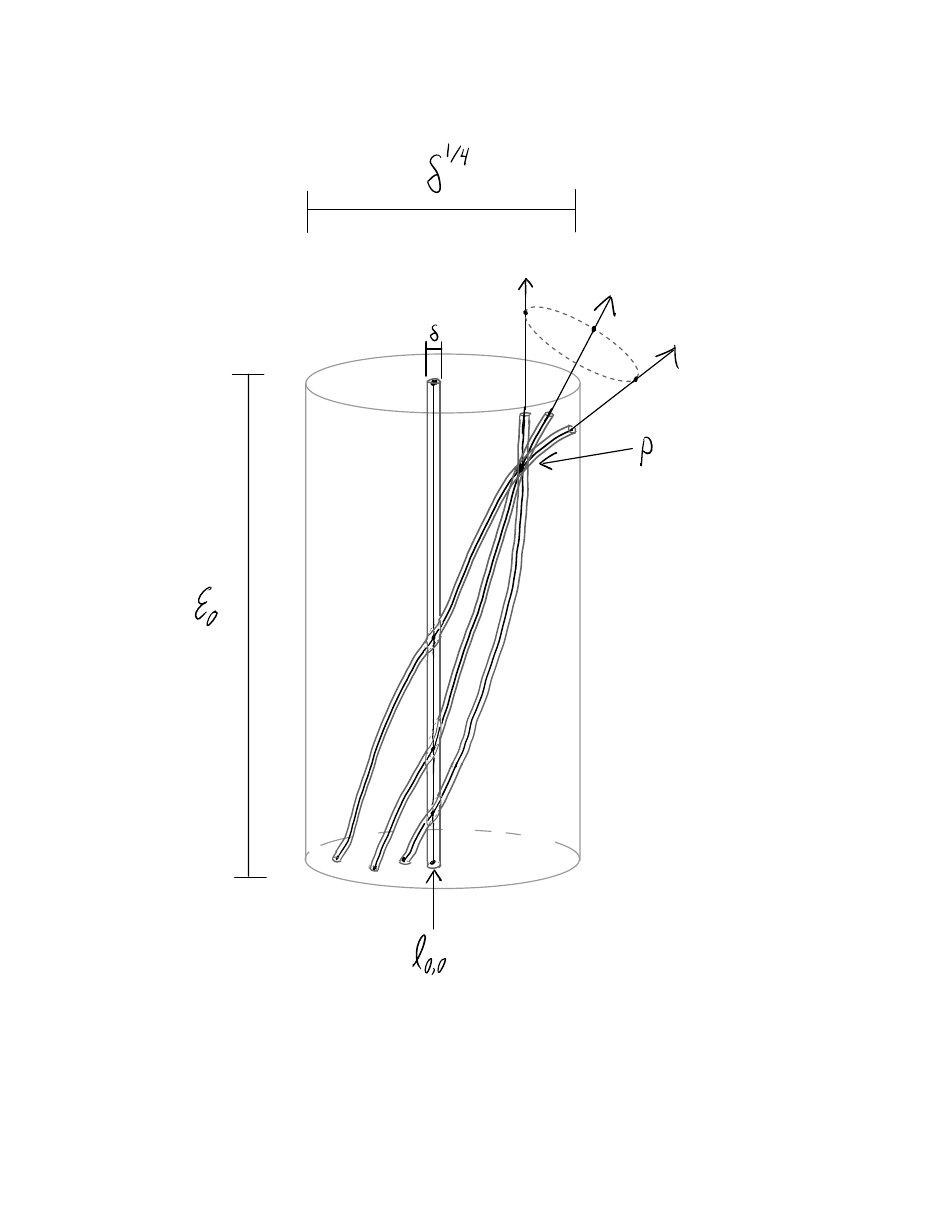}
    \caption{Cartoon of Proposition \ref{prop: tan example curved}.}
    \label{fig: tan failure diagram}
\end{figure}

Proposition \ref{prop: tan example curved} shows that the error in Theorem \ref{thm: geometric characterization of Bourgain} cannot be strengthened to $O(|(\xi,v)-(\xi_0,v_0)|^4)$, and we suspect that a version with error $O(|(\xi,v) - (\xi_0,v_0)|^3)$ is also impossible. One should contrast Example \ref{ex: tan example} with translation-invariant phase functions and phase functions arising from Carleson--Sj\"olin operators on manifolds. If the phase function $\phi$ in either of those cases satisfies Bourgain's condition, then $\mathcal C(\phi)$ is diffeomorphic to a family of lines \cite{DaiOscillatory, gao2025curvedkakeyasetsnikodym}.
Not only is $\mathcal C(\phi_{n,\tan})$ not diffeomorphic to a family of lines, which we believe is the first example satisfying Bourgain's condition with this property, 
it is not even diffeomorphic to a family of lines after allowing $O(|(\xi,v)|^4)$ perturbations.

Numerous arguments in the study of Kakeya sets ultimately use the property that lines may be efficiently organized into planes. For example, Wolff's hairbrush argument uses that the $\delta$-tubes through a fixed stem can be organized into essentially disjoint slabs \cite{wolffHairbrush}. The argument of Wang--Zahl uses that $\delta$-tubes can be organized into planks \cite{wangZahlKakeya}. At least outside a fixed $(\delta^{1/4}, \phi_{n,\tan})$-tube, the $(\delta, \phi_{n, \tan})$-tubes do not have properties analogous to these. Thus to go beyond the sticky case of Conjecture \ref{conj: Bourgain Kakeya}, new ideas are needed.

\subsection{Organization of article}
\begin{itemize}    
    \item In Section \ref{sec: mild reformulation of Bourgain} we prove Proposition \ref{prop: mild reformulation Bourgain}. 
    \item In Section \ref{sec: geometric characterization of Bourgain} we prove Theorem \ref{thm: geometric characterization of Bourgain} using Proposition \ref{prop: mild reformulation Bourgain}.
    \item In Section \ref{sec: discretizing} we sketch how to discretize the sticky $\phi$-Kakeya problem in a similar way to \cite[Section 2.3]{wangZahlstickyKakeya}, and sketch the proof of Proposition \ref{prop: discretized implies dimension}.
    \item In Section \ref{sec: sticky reduction} we prove Theorem \ref{thm: sticky reduction formal}.
    \item In Section \ref{sec: tan example} we construct Example \ref{ex: tan example} using Proposition \ref{prop: mild reformulation Bourgain}, and prove Proposition \ref{prop: tan example curved}.
\end{itemize}

\subsection{Discussion and future work}\label{subsec: future work}
	Having proved the sticky case of Conjecture \ref{conj: Bourgain Kakeya} in $\R^3$, a natural next step is to attempt a general to sticky reduction in $\R^3$ in the spirit of Wang--Zahl. As mentioned above, in light of the $\tan$-example, this will require new ideas. 
    A potentially promising route is to replace the Frostman and Katz-Tao convex Wolff axioms from \cite{wangZahlKakeya} with corresponding \emph{polynomial} Wolff axioms (PWA), which are better adapted to curved tubes. One then needs to prove versions of the sticky $\phi$-Kakeya conjecture adapted to the Frostman and Katz-Tao PWA. For convex sets, this is part (A) and part (B) of \cite[Theorem 7.3]{guth2026streamlinedproofkakeyaset} respectively (note that part (A) is essentially the same as \cite[Theorem 5.2]{WangZahlAssouadKakeya}). We briefly set up the Frostman PWA and the generalization of \cite[Theorem 7.3(A)]{guth2026streamlinedproofkakeyaset}, and sketch the proof. We expect a generalization of \cite[Theorem 7.3(B)]{guth2026streamlinedproofkakeyaset} to be true as well, but there seem to be more technical complications. We will be somewhat informal below 
    and adopt some of the notation from \cite{guth2026streamlinedproofkakeyaset}. To make the sketch of the statements and proofs below rigorous, care would need to be taken to quantify `$\lessapprox$'.

    Let $\T$ be a set of $(\delta,\phi)$-tubes and $S$ a semi-algebraic set of bounded complexity (we take the complexity larger than an appropriate approximation of the $\phi$-curves by polynomials). Define the density 
    $\Delta(\T,S)=|S|^{-1}\sum_{T \in \T[S]} |T|$ and the Frostman PWA constant
    \begin{align}\label{eq: C_F' def}
        C'_F(\T,S) = \frac{\sup_{S' \subset S}\Delta(\T,S')}{\Delta(\T,S)},
    \end{align}
    where the supremum is taken over $S'\subset S$ semi-algebraic with bounded complexity.
    The set $\T$ satisfies the \emph{Frostman PWA in $S$} if $C'_F(\T,S) \lessapprox 1$. It satisfies 
    the \emph{Frostman PWA at every scale} if for every $\delta \leq \rho \leq 1$, we have $C_F'(\T[T_\rho], T_\rho) \lessapprox 1$ for every $T_\rho \in \T_\rho$ (see \cite[Definition 7.1(A)]{guth2026streamlinedproofkakeyaset}). 
    The sticky $\phi$-Kakeya result adapted to Frostman PWA is then as follows: 
    \begin{quote}
    If $\T$ is a set of $(\delta,\phi)$-tubes satisfying the Frostman PWA at every scale, with shadings $Y(T)$ satisfying $|Y(T)| \gtrapprox |T|$, then $|\bigcup_{T \in \T} Y(T)| \gtrapprox 1$. 
    \end{quote}
    We briefly sketch the proof. 
    We only need the special case for straight tubes \cite[Theorem 7.3(A)]{guth2026streamlinedproofkakeyaset} and Theorem \ref{thm: geometric characterization of Bourgain} as input.
    We proceed by induction on scales on the radius $\rho$ of the $\phi$-tubes (to see how to make this formal, one can follow the proof of the similar Theorem \ref{thm: sticky reduction formal}). The base case is when $\rho \gtrapprox 1$ which is straightforward. Fix $\delta \leq \rho \leq 1$ and assume that the result holds for 
    $(\sqrt{\rho},\phi)$-tubes. 
    We will prove it holds for $(\rho,\phi)$-tubes. Let $\T$ be a collection of $(\rho,\phi)$-tubes satisfying the Frostman PWA at every scale with $|Y(T)| \gtrapprox |T|$.
    Since the Frostman condition is inherited upwards (see \cite[Remark 3.3]{guth2026streamlinedproofkakeyaset}),
    $\T_{\sqrt{\rho}}$ satisfies Frostman PWA at every scale. Fix $T_{\sqrt{\rho}} \in \T_{\sqrt{\rho}}$. By Theorem \ref{thm: geometric characterization of Bourgain}, there is a diffeomorphism $F$ such that $F(\T[T_{\sqrt{\rho}}]):=\{F(T_\rho) : T \in\T [T_{\sqrt{\rho}}]\}$ is a set of straight $\rho$-tubes. By appropriate approximations, we may assume $F$ is polynomial of bounded degree. Then as Frostman PWA is inherited upwards and convex sets are in particular semi-algebraic sets of bounded complexity, $F(\T[T_{\sqrt{\rho}}])$ rescales to a family of straight $\sqrt{\rho}$-tubes which satisfy the Frostman convex Wolff axioms at every scale. Now we may apply \cite[Theorem 7.3(A)]{guth2026streamlinedproofkakeyaset} and rescale back to find $|\bigcup_{T \in \T[T_{\sqrt{\rho}}]} T| \gtrapprox (\sqrt{\rho})^2$. Finally we apply the inductive hypothesis to $\T_{\sqrt{\rho}}$ with the shadings $Y(T_{\sqrt{\rho}}) = \bigcup_{T\in \T[T_{\sqrt{\rho}}]} T$ to close the induction.

    The general to sticky reduction for Conjecture \ref{conj: Bourgain Kakeya} will require a better understanding of certain \emph{twisted} grains that can arise from $(\delta, \phi)$-tubes. We are currently trying to understand these new phenomena for the $\tan$-example.

    As part of an in-progress collaboration, we are building off Proposition \ref{prop: mild reformulation Bourgain} to find more global information about Bourgain's condition, and studying the genericity of examples like the $\tan$-example.

\subsection{Thanks}

The author is grateful to Terence Tao for many helpful conversations about this project, and especially for suggesting the argument for Proposition \ref{prop: mild reformulation Bourgain}. The author also thanks Hong Wang, Shaoming Guo, and Siddharth Mulherkar for their encouragement and helpful conversations. Finally, the author thanks the anonymous referees for their careful reading of an earlier version of this manuscript and for comments that substantially improved its accuracy and presentation.

\section{Proof of Proposition \ref{prop: mild reformulation Bourgain}}\label{sec: mild reformulation of Bourgain}

Recall the Gauss map $G(\x,\xi) = \bigwedge_{j=1}^{n-1} \partial_{\xi_j} \nabla_\x \phi(\x,\xi)$. We will make use of the fact that 
\begin{align}\label{eq: G kills xi phi}
    (G(\x,\xi) \cdot \nabla_\x) \nabla_\xi \phi(\x,\xi) = 0
\end{align}
below. 

\subsection{Forward implication}
\begin{proof}
Assume that $\phi$ satifies Bourgain's condition and fix $\xi_0 \in \Sigma$.
Since $G(\cdot,\xi_0)$ is a nonzero vector field on $M$, there are local coordinates, depending smoothly on the choice of $\xi_0$, for which $G(\x, \xi_0) = e_n$. 
In the new coordinates \eqref{eq: G kills xi phi} becomes $\partial_{x_n} \nabla_\xi \phi(\x,\xi_0) = 0$.
Thus $\nabla_\xi \phi(\x,\xi_0)$ depends only on $x = (x_1,\ldots, x_{n-1})$, and by (H1) is therefore diffeomorphic to $x$ (depending smoothly on the choice of $\xi_0$).

In the new coordinates \eqref{eq: B cond original} in the statement of Bourgain's condition (Definition \ref{def: Bourgain cond original}) becomes 
\begin{align}\label{eq: B new coords}
    \partial_{x_n}^2 \nabla_\xi^2 \phi(\x,\xi_0) = \lambda(\x,\xi_0)\partial_{x_n} \nabla_\xi^2 \phi(\x,\xi_0).
\end{align}
We may solve this differential equation in $x_n$ to find smooth symmetric-matrix-valued functions $\tilde A, \tilde B : \V \times \Sigma \to \mathrm{Sym}_{n-1}(\R)$, and a smooth function $\tilde c : M \times \Sigma \to \R$ such that 
\begin{align}
    \nabla_\xi^2 \phi(\x, \xi_0) = \tilde A(x,\xi_0) + \tilde c(\x,\xi_0) B(x,\xi_0). 
\end{align}
By using that $\nabla_\xi \phi(\x,\xi)$ is diffeomorphic to $x$ and then undoing the change of variables in $\x$,  
we obtain $A,B,c$ satisfying
\begin{align}\label{eq: sec 2: B cond}
    \nabla_\xi^2 \phi(\x,\xi_0) = A(\nabla_\xi\phi(\x,\xi_0),\xi_0) + c(\x,\xi_0) B(\nabla_\xi \phi(\x,\xi_0),\xi_0).
\end{align}
Since the change of variables in $\x$ depends smoothly on the choice of $\xi_0$, the functions $A(v,\xi_0),B(v,\xi_0),c(\x,\xi_0)$ are also smooth in $\xi_0$. 

By applying the vector field $G(\x,\xi_0) \cdot \nabla_\x$ to \eqref{eq: sec 2: B cond}, we see from (H2) that $(G(\x, \xi_0) \cdot \nabla_\x)c(\x, \xi_0) \neq 0$ and $B(\nabla_\xi \phi(\x,\xi_0),\xi_0)$ is nondegenerate. When (H2+) is satisfied, $B(\nabla_\xi \phi(\x,\xi_0),\xi_0)$ is furthermore positive-definite. 
\end{proof}

\subsection{Reverse implication}

\begin{proof}
Suppose that we have data $A,B,c$ such that \eqref{eq: ABc equation} holds. 
Applying the operator $(G(\x, \xi) \cdot \nabla_\x)$ to both sides of \eqref{eq: ABc equation} and using \eqref{eq: G kills xi phi}, 
\begin{align}\label{eq: 1 reverse implication mild gen}
	(G(\x,\xi) \cdot \nabla_\x)\nabla_\xi^2 \phi(\x,\xi) &= ((G(\x,\xi) \cdot \nabla_\x) c(\x,\xi)) B(\nabla_\xi \phi(\x,\xi), \xi).
\end{align}
If $(G(\x,\xi) \cdot \nabla_\x)c(\x,\xi) \neq 0$ and $B$ is nondegenerate (resp. positive-definite), then (H2) (resp. (H2+)) holds. 
Applying the operator $G \cdot \nabla_\x$ once again, 
\begin{align}\label{eq: 2 reverse implication mild gen}
	(G \cdot \nabla_\x)^2 \nabla_\xi^2 \phi = ((G \cdot \nabla_\x)^2 c) B(\nabla_\xi \phi, \xi).
\end{align}
By combining \eqref{eq: 1 reverse implication mild gen} and \eqref{eq: 2 reverse implication mild gen}, we find
\begin{align}
	(G \cdot \nabla_\x)^2 \nabla_\xi^2 \phi = \frac{(G \cdot \nabla_\x)^2 c}{(G \cdot \nabla_\x) c}  (G \cdot \nabla_x) \nabla_\xi^2 \phi.
\end{align}
Therefore Bourgain's condition in the sense of Definition \ref{def: Bourgain cond original} holds with $\lambda = \frac{(G \cdot \nabla_\x)^2 c}{(G \cdot \nabla_\x) c}$. 
\end{proof}

\section{Proof of Theorem \ref{thm: geometric characterization of Bourgain}}\label{sec: geometric characterization of Bourgain}

We will need to choose a parameterization of the $\phi$-curves. By a diffeomorphism in $\x$, we may
assume that the $\phi$-curves are transverse to the $t$-slices $\{(x,t) : x \in \R^{n-1}\}$. That is to say $\det(\nabla_x \nabla_\xi \phi) \neq 0$. We may further assume that $0_M = 0 \in \R^n$ and by a constant rescaling, the $\phi$-curves are defined for $t \in [-1,1]$ (after passing to compact subsets of $M, \Sigma, \V$). Finally by a translation in $\xi$ and after replacing $\phi$ by $\phi(\x,\xi) - \phi(0,\xi)$, we may assume that $0_\Sigma = 0 \in \R^{n-1}$ and $\nabla_\xi \phi(0,0) = 0$. Let $X(\xi,v,t)$ be the unique solution in $x$ to
\begin{align}
    \nabla_\xi \phi(x,t,\xi) = v.
\end{align}
The curves $\ell_{\xi,v}$ are thus given by 
\begin{align}
    \ell_{\xi,v} = \{(X(\xi,v,t),t) : |t| \leq 1\},
\end{align}
and we will write $\ell_{\xi,v}(t) = (X(\xi,v,t),t)$. By taking a derivative of the equation $\nabla_\xi \phi(\ell_{\xi,v}(t), \xi)= v$ in $t$, we find $(\partial_t \ell_{\xi,v}(t) \cdot \nabla_\x)\nabla_\xi \phi(\ell_{\xi,v}(t),\xi) =0$. Thus 
\begin{align} \label{eq: integral curve equation}
    \partial_t \ell_{\xi,v}(t) = \lambda(\xi,v,t) G(\ell_{\xi,v}(t),\xi),
\end{align}
for some smooth nonzero scalar function $\lambda(\xi,v,t)$, where $G$ is the Gauss map defined earlier. We will use this fact below. 

We record the following implicit derivative calculations of $X$ for future use: 
\begin{align} 
    \nabla_v X(\xi,v,t) &= \nabla_x \nabla_\xi \phi(X(\xi,v,t),t,\xi), \label{eq: nabla v X} \\
    \nabla_\xi X(\xi,v,t) &= - \nabla_v X(\xi,v,t) \nabla_\xi^2 \phi(X(\xi,v,t), t,\xi). \label{eq: nabla xi X}
\end{align}
Since $\nabla_x \nabla_\xi \phi$ is invertible, \eqref{eq: nabla v X} shows $\nabla_v X(\xi,v,t)$ is invertible too.

\subsection{Forward implication}

\begin{proof}
Let $\phi$ be a phase function satisfying Bourgain's condition. By Proposition \ref{prop: mild reformulation Bourgain} there are $A,B,c$ with $B$ nondegenerate and $(G \cdot \nabla_\x) c \neq 0$, such that \eqref{eq: ABc equation} holds. Fix $(\xi_0, v_0) \in \Sigma \times \V$. We will find the data $F = F_{\xi_0,v_0},\Xi = \Xi_{\xi_0,v_0},V = V_{\xi_0,v_0}$ such that \eqref{eq: bourgain characterization} holds. We may Taylor expand $X$ to first order in $(\xi, v)$ near $(\xi_0,v_0)$ to get 
\begin{align}\label{eq: taylor expansion}
    X(\xi, v, t) = X(\xi_0, v_0, t) + \nabla_\xi X(\xi_0, v_0, t)(\xi - \xi_0) + \nabla_v X(\xi_0, v_0, t)(v - v_0) + O(r^2),
\end{align}
where $r:=|(\xi,v)-(\xi_0,v_0)|$. 
We now relate the terms $\nabla_\xi X(\xi_0, v_0, t)$ and $\nabla_v X(\xi_0, v_0, t)$ to the data $A,B,c$. Define 
\begin{align}
    \Xi(\xi) &:= B(v_0,\xi_0)(\xi-\xi_0), \\
    V(\xi,v) &:= (v - v_0) - A(v_0,\xi_0)(\xi-\xi_0).
\end{align}
The map $\Xi$ is a diffeomorphism since $B(v_0,\xi_0)$ is nondegenerate. Then by \eqref{eq: nabla xi X} and \eqref{eq: ABc equation},
\begin{align}
    X(\xi,v,t) &= X(\xi_0,v_0,t) + \nabla_v X(\xi_0,v_0,t)(V(\xi,v) - c(X(\xi_0,v_0,t),t, \xi_0)\Xi(\xi)) + O(r^2).
\end{align}
We will successively apply three diffeomorphisms to straighten the family $\ell_{\xi,v}$ into a family of lines, up to error $O(|(\xi,v)-(\xi_0,v_0)|^2)$. The first is a recentering, the second is a twist in each $t$-slice, and the third is a diffeomorphism in the $t$-component. 

\noindent \textbf{Step 1: Recentering.}

Define the diffeomorphism $F_1(x,t) = (x - X(\xi_0,v_0,t),t)$. Then 
\begin{align}
    \ell^1_{\xi,v} &:= F_1(\ell_{\xi,v}) \\
    &= \{(\nabla_v X(\xi_0,v_0,t) (V(\xi,v) - c(X(\xi_0,v_0,t),t,\xi_0)\Xi(\xi)) + O(r^2),t) : |t| \leq 1\}.
\end{align}

\noindent \textbf{Step 2: Twist in each $t$-slice.}

Define the diffeomorphism $F_2(x,t) = (\nabla_v X(\xi_0,v_0,t)^{-1}x,t)$. Then 
\begin{align}
    \ell^2_{\xi,v} &:= F_2(\ell^1_{\xi,v}) \\
    &= \{(V(\xi,v)-c(X(\xi_0,v_0,t),t, \xi_0)\Xi(\xi) + O(r^2), t) : |t| \leq 1\}.
\end{align}

\noindent \textbf{Step 3: Diffeomorphism in the $t$-component.}

Define $\tilde c(t) := c(\ell_{\xi_0,v_0}(t),\xi)=c(X(\xi_0,v_0,t),t,\xi_0)$. Since $\partial_t \ell_{\xi_0,v_0}(t)$ is a nonzero multiple of $G(\ell_{\xi_0,v_0}(t),\xi_0)$, $\tfrac{d}{dt} \tilde c(t)$ is a nonzero multiple of 
\begin{align}
    (G(\ell_{\xi_0,v_0}(t),\xi_0) \cdot \nabla_\x) c(\ell_{\xi_0,v_0}(t),\xi_0) \neq 0,
\end{align}
and hence $\tfrac{d}{dt} \tilde c(t) \neq 0$. After a constant rescaling, we may assume that $\tilde c$ is a diffeomorphism when restricted to $[-1,1]$. Thus the map $F_3(x,t) = (x,\tilde c(t))$ is a diffeomorphism. We compute 
\begin{align}
    \ell^3_{\xi,v} &:= F_3(\ell^2_{\xi,v}) \\
    &= \{(V(\xi,v) - \tilde c(t) \Xi(\xi)+O(r^2), \tilde c(t)) : |t| \leq 1\} \\
    &= \{(V(\xi,v) - s\Xi(\xi) + O(r^2), s) : |s| \lesssim 1\} \\ 
    &\subset \mathrm{line}_{\Xi(\xi), V(\xi,v)} + O(r^2). 
\end{align}

Collecting the steps above, we can take 
\begin{align}
    F_{\xi_0,v_0}(x,t) &= (\nabla_v X(\xi_0,v_0,t)^{-1}(x - X(\xi_0,v_0,t)), c(X(\xi_0,v_0,t),t,\xi_0)),\\
    \Xi_{\xi_0,v_0}(\xi) &= B(v_0,\xi_0) (\xi-\xi_0), \\
    V_{\xi_0,v_0}(\xi,v) &= (v - v_0) - A(v_0,\xi_0)(\xi-\xi_0),
\end{align}
to conclude the forward direction of Theorem \ref{thm: geometric characterization of Bourgain}. Indeed, $\nabla_{v} V_{\xi_0,v_0}$ and $\nabla_\xi \Xi_{\xi_0,v_0}$ are invertible so 
\begin{align}
    \nabla_{\xi,v}(\Xi_{\xi_0,v_0}, V_{\xi_0,v_0}) = 
    \begin{pmatrix}
        \nabla_\xi \Xi_{\xi_0,v_0} &0 \\
        \nabla_\xi V_{\xi_0,v_0} & \nabla_v V_{\xi_0,v_0}
    \end{pmatrix}
\end{align}
is invertible. 
\end{proof}

\subsection{Reverse implication}\label{subsubsec: Bourgain characterization reverse implication}
\begin{proof}
Fix $(\xi_0,v_0) \in \Sigma \times \V$ and consider the data $F=F_{\xi_0,v_0}$, $\Xi=\Xi_{\xi_0,v_0}$, and $V=V_{\xi_0,v_0}$ in Theorem \ref{thm: geometric characterization of Bourgain}. In the proof, we will not assume that $\nabla_v V$ is invertible a priori, and instead derive this from the other properties of the data in order to address Remark \ref{remark: invertibility follows}.
Write $F(x,t) = (H(x,t), h(x,t)) \in \R^{n-1} \times \R$. 
At the level of parameterizations, \eqref{eq: bourgain characterization} says that
\begin{align}\label{eq: reverse impl line eq 1}
    H(X(\xi,v,t),t) &= V(\xi,v) - h(X(\xi,v,t),t)\Xi(\xi) + O(|(\xi,v)-(\xi_0,v_0)|^2).
\end{align}

\noindent \textbf{Step 1: Renormalizing}

Define the renormalized maps
\begin{align}
    \tilde X(\xi,v,t) &= X(\xi_0+\xi, v_0+v, t) - X(\xi_0,v_0,t), \label{eq: tilde X}\\
    \tilde H(x,t) &= H(X(\xi_0,v_0,t)+x,t) - (V(\xi_0,v_0) -h(X(\xi_0,v_0,t)+x,t))\Xi(\xi_0),\label{eq: tilde H} \\
    \tilde h(x,t) &= h(X(\xi_0,v_0,t)+x,t), \label{eq: tilde h}\\
    \tilde \Xi(\xi) &= \Xi(\xi_0 + \xi)-\Xi(\xi_0), \label{eq: tilde Xi}\text{ and} \\
    \tilde V(\xi,v) &= V(\xi_0 + \xi, v_0+v) -V(\xi_0,v_0). \label{eq: tilde V} 
\end{align}
These maps satisfy
\begin{align}
    \tilde X(0,0,t),\tilde H(0,t), \tilde \Xi(0), \tilde V(0,0) =0.
\end{align}
By \eqref{eq: nabla v X} and \eqref{eq: nabla xi X}, $\nabla_v \tilde X(\xi,v,t)$ is invertible and
\begin{align}\label{eq: nabla xi tilde X}
    \nabla_\xi \tilde X(\xi,v,t) &= -\nabla_v \tilde X(\xi,v,t) \nabla_\xi^2 \phi(X(\xi_0,v_0,t)+\tilde X(\xi,v,t),t,\xi_0 + \xi),
\end{align}
which we will make use of later. 

We claim that $\tilde F(x,t) := (\tilde H(x,t), \tilde h(x,t))$ is a local diffeomorphism. To see this, notice that $\tilde F = F_2 \circ F \circ F_1$, where 
\begin{align}
    F_1(x,t) &= (X(\xi_0,v_0,t)+x,t)
    \ \text{and} \\
    F_2(x,t) &= (x-(V(\xi_0,v_0)-t)\Xi(\xi_0), t).
\end{align}
The map $F$ is a local diffeomorphism by hypothesis, and $F_1,F_2$ can be seen to be local diffeomorphisms after a quick calculation of their Jacobians. Thus $\tilde F$ is a local diffeomorphism. Since $\tilde H(0,t) = 0$, we find $\partial_t \tilde H(0,t) = 0$ and thus  
\begin{align}
    \nabla_{\x}\tilde F(0,t) = 
    \begin{pmatrix}
        \nabla_x \tilde  H(0,t) & 0 \\
        \nabla_x \tilde H(0,t) & \partial_t \tilde h(0,t)
    \end{pmatrix}.
\end{align}
Since this matrix is block upper triangular, $\det(\nabla_\x\tilde F(0,t)) = \det(\nabla_x \tilde H(0,t)) \partial_t \tilde h(0,t)$. As $\det(\nabla_\x\tilde F(0,t)) \neq 0$, 
\begin{align}
    \det(\nabla_x \tilde H(0,t)) &\neq 0 \label{eq: nabla x tilde H invertible} \text{ and}\\
    \partial_t \tilde h(0,t) &\neq 0 \label{eq: partial t tilde h nonzero}.
\end{align}
We will use both these facts below as well.

Subtracting $(V(\xi_0,v_0)-h(X(\xi_0,v_0,t) + x,t))\Xi(\xi_0)$ from both sides of \eqref{eq: reverse impl line eq 1} and substituting in \eqref{eq: tilde X} and \eqref{eq: tilde V}, we get 
\begin{align}\label{eq: reverse impl line eq 2}
    \tilde H(\tilde X(\xi,v,t),t) &= \tilde V(\xi,v) - \tilde h(\tilde X(\xi,v,t),t) \tilde \Xi(\xi) + O(|(\xi,v)|^2). 
\end{align}

\noindent \textbf{Step 2: Taylor expanding}

Compute the following Taylor expansions:
\begin{align}
    \tilde X(\xi,v,t) &= \nabla_\xi \tilde X(0,0,t)\xi + \nabla_v \tilde X(0,0,t)v+O(|(\xi,v)|^2), \\
    \tilde H(x,t) &= H_0(t)x + O(|x|^2), \\
    \tilde h(x,t) &= h_0(t) + O(|x|), \\
    \tilde \Xi(\xi,v) &= \Xi_0 \xi + O(|\xi|^2), \\
    \tilde V(\xi,v) &= V_0 \xi + V_1 v + O(|(\xi,v)|^2),
\end{align}
where we defined 
\begin{align}
    H_0(t) &:= \nabla_x \tilde H(0,t) : \R \to \R^{(n-1) \times (n-1)}, \label{eq: H0}\\
    h_0(t) &:= \tilde h(0,t) : \R \to \R, \label{eq: h0}\\
    \Xi_0 &:= \nabla_\xi \tilde \Xi(0) \in \R^{(n-1) \times (n-1)},\label{eq: Xi0}  \\
    V_0 &:= \nabla_\xi \tilde V(0,0) \in \R^{(n-1) \times (n-1)}, \label{eq: V0}\\
    V_1 &:= \nabla_v \tilde V(0,0) \in \R^{(n-1) \times (n-1)}. \label{eq: V1}
\end{align}
The equation \eqref{eq: reverse impl line eq 2} then reads 
\begin{align}
    H_0(t)(&\nabla_\xi \tilde X(0,0,t)\xi + \nabla_v \tilde X(0,0,t)v) \label{eq: ungrouped}
    \\&= V_0 \xi + V_1 v - h_0(t)\Xi_0 \xi + O(|(\xi,v)|^2). \nonumber
\end{align}

\noindent \textbf{Step 3: Grouping terms}

Grouping terms in \eqref{eq: reverse impl line eq 2} by $\xi$ and $v$,
\begin{align}\label{eq: reverse impl grouped}
    (H_0(t)&\nabla_\xi \tilde X(0,0,t)-V_0+h_0(t)\Xi_0)\xi \\
    &+ (H_0(t)\nabla_v \tilde X(0,0,t)-V_1)v = O(|(\xi,v)|^2).\nonumber
\end{align}
By taking a $\xi$ derivative of \eqref{eq: reverse impl grouped} and setting $(\xi,v) = (0,0)$, and similarly taking a $v$ derivative of \eqref{eq: reverse impl grouped} and setting $(\xi,v)=(0,0)$, we find 
\begin{align}
    H_0(t) \nabla_\xi \tilde X(0,0, t) - V_0 + h_0(t) \Xi_0 &= 0, \label{eq: 1}\\
    H_0(t) \nabla_v\tilde X(0,0,t) - V_1 &= 0. \label{eq: 2}
\end{align}
Since $H_0(t)= \nabla_x \tilde H(0,t)$ is invertible from \eqref{eq: nabla x tilde H invertible} and $\nabla_v \tilde X$ is invertible, \eqref{eq: 2} shows that $V_1$ is invertible. This addresses Remark \ref{remark: invertibility follows}. Solving for $H_0(t)$ in \eqref{eq: 2} and
plugging the result into \eqref{eq: 1}, we get 
\begin{align}
    V_1 \nabla_v \tilde X(0,0,t)^{-1} \nabla_\xi \tilde X(0,0,t) - V_0+h_0(t) \Xi_0 =0.
\end{align}
Using the invertibility of $V_1$ and the formula \eqref{eq: nabla xi tilde X}, we arrive at 
\begin{align} \label{eq: 3}
    \nabla_\xi^2 \phi(X(\xi_0,v_0,t),t,\xi_0) = -V_1^{-1} V_0 + h_0(t)V_1^{-1} \Xi_0.
\end{align}
We also record that $\partial_t h_0(t) \neq 0$ from \eqref{eq: partial t tilde h nonzero}.

\noindent \textbf{Step 4: Conclusion}

We now use the work above to verify the hypotheses of the reverse direction of Proposition \ref{prop: mild reformulation Bourgain}, in order to conclude that $\phi$ satisfies Bourgain's condition.
By expanding the definitions of $h_0(t)$, $\Xi_0$, $V_0$, and $V_1$, \eqref{eq: 3} becomes 
\begin{align}\label{eq: messy xixi phi expression}
    \nabla_\xi^2 \phi(\ell_{\xi_0,v_0}(t), \xi_0) &= -\nabla_v V_{\xi_0, v_0}(\xi_0,v_0)^{-1} \nabla_\xi V_{\xi_0,v_0}(\xi_0,v_0) \\
    &+h_{\xi_0,v_0}(\ell_{\xi_0,v_0}(t)) \nabla_v V_{\xi_0,v_0}(\xi_0,v_0)^{-1} \nabla_\xi \Xi_{\xi_0,v_0}(\xi_0), \nonumber
\end{align}
and this holds for any choice of $(\xi_0,v_0) \in \Sigma\times \V$.
Define
\begin{align}
    A(v_0,\xi_0) &= -\nabla_v V_{\xi_0,v_0}(\xi_0,v_0)^{-1} \nabla_\xi V_{\xi_0,v_0}(\xi_0,v_0), \\
    B(v_0,\xi_0) &= \nabla_v V_{\xi_0,v_0}(\xi_0,v_0)^{-1} \nabla_\xi \Xi_{\xi_0,v_0}(\xi_0),\\
    c(\x, \xi_0) &= h_{\xi_0,\nabla_{\xi} \phi(\x,\xi_0)}(\x).
\end{align}
First we prove that \eqref{eq: ABc equation} holds. Fix $(\x,\xi_0) \in M \times \Sigma$, where again $\x = (x,t)$. Taking $(\xi_0,v_0,t) = (\xi_0, \nabla_\xi \phi(\x, \xi_0), t)$ in \eqref{eq: messy xixi phi expression}, we conclude
\begin{align}
    \nabla_\xi^2 \phi(\x,\xi_0) = A(\nabla_\xi \phi(\x,\xi_0),\xi_0) +c(\x,\xi_0) B(\x, \xi_0).
\end{align}
Next we verify that $(G(\x,\xi_0) \cdot \nabla_\x) c(\x,\xi_0) \neq 0$. Again choose $v_0 = \nabla_\xi \phi(\x,\xi)$ so $\ell_{\xi_0,v_0}(t) = \x$.
We have $c(\ell_{\xi_0,v_0}(s), \xi_0) = h_{\xi_0, v_0}(\ell_{\xi_0,v_0}(s)) $. Taking a derivative in $s$, using \eqref{eq: integral curve equation}, and using $\partial_t h_0(s) \neq 0$ recorded in Step 3, we find 
\begin{align}
    (G(\ell_{\xi_0,v_0}(s), \xi_0) \cdot \nabla_\x) c(\ell_{\xi_0,v_0}(s), \xi_0) \neq 0.
\end{align}
Plugging in $s = t$, we conclude $(G(\x,\xi_0) \cdot \nabla_\x) c(\x,\xi_0) \neq 0$. Finally $B(v_0,\xi_0)$ is invertible since it is a product of invertible matrices. Thus $\phi$ satisfies Bourgain's condition by Proposition \ref{prop: mild reformulation Bourgain}. 
\end{proof}

\section{Discretizing sticky $\phi$-Kakeya sets and Proposition \ref{prop: discretized implies dimension}} \label{sec: discretizing}

In this section, we only describe the mild adjustments to \cite[Section 2.3]{wangZahlstickyKakeya} which are needed for the standard discretization arguments to carry over to the curved case. 

To analyze families of $\phi$-curves with packing dimension close to $n-1$, similarly to \cite{wangZahlstickyKakeya}, define for each $\rho > 0$ and $\eps > 0$ the class of 
``Quantitatively Sticky'' sets $\mathrm{QStick}(t,\rho)$ to be the collection of sets $L \subset \mathcal C(\phi)$ that satisfy 
\begin{align}
    |\{v \in \V : (v,\xi) \in N_\delta(L)\}| \leq \delta^{n-1-t} \text{ for all } \xi \in \Sigma, \delta \in (0,\rho).
\end{align}
The next lemma is the analogue of \cite[Lemma 2.2]{wangZahlstickyKakeya}.
\begin{lemma}\label{lem: sticky interp}
    Let $L \subset \mathcal C(\phi)$ with $|\dir(L)| > 0$. Then for all $t > \dim_\P(L) - (n-1)$, there exists $L' \subset L$ and $\rho > 0$ with $|\dir(L')| > 0$ and $L' \in \mathrm{QStick}(t,\rho)$. 
\end{lemma}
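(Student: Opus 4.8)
The plan is to transcribe the proof of \cite[Lemma 2.2]{wangZahlstickyKakeya}; the only thing one really has to verify is that that argument uses nothing about straight lines beyond the metric structure of the space of lines together with the $1$-Lipschitz property of the direction map, and both are available here because $d(\ell_{\xi,v},\ell_{\xi',v'})=|\xi-\xi'|+|v-v'|$ is, up to bi-Lipschitz equivalence, literally the same ``product'' metric on the parameter domain $\Sigma\times\V\subset\R^{2(n-1)}$ as in the case $\phi=\phi_{n,\rest}$. The one place that needs a (minor) new word is verifying the non-concentration bound for \emph{every} $\xi\in\Sigma$, including directions not in $\dir(L')$, and that is the step I expect to be the only slightly delicate point; it is handled by a ``representative direction'' observation below, costing a harmless factor of $3$ in the scale.

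First I would unwind the packing dimension hypothesis. Since $t>\dim_\P(L)-(n-1)$ we may pick $s$ with $\dim_\P(L)<s<n-1+t$, and by the standard characterization of packing dimension write $L=\bigcup_k L_k$ with $\overline{\dim}_B(L_k)\le s$ for every $k$. Because $\dir(\ell_{\xi,v})=\xi$ is $1$-Lipschitz, $\dir(L)=\bigcup_k\dir(L_k)$, so countable subadditivity of Lebesgue outer measure and $|\dir(L)|>0$ produce a $k_0$ with $|\dir(L_{k_0})|>0$. Since $\overline{\dim}_B(L_{k_0})\le s$, there is $\rho_0>0$ so that the covering number of $L_{k_0}$ by $d$-balls of radius $\delta$ is at most $\delta^{-s}$ for all $\delta\in(0,\rho_0)$.

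Next comes a Fubini plus Chebyshev estimate at each scale, followed by a dyadic pigeonhole. Covering $L_{k_0}$ by $\le\delta^{-s}$ balls of radius $\delta$ and thickening gives $|N_\delta(L_{k_0})|\lesssim\delta^{-s}\cdot\delta^{2(n-1)}=\delta^{2(n-1)-s}$, the neighborhood being taken in $\R^{2(n-1)}$. Since $|N_\sigma(L_{k_0})|=\int|\{v:(v,\xi)\in N_\sigma(L_{k_0})\}|\,d\xi$ by Tonelli, Chebyshev bounds the measure of the bad set $\{\xi\in\Sigma:|\{v:(v,\xi)\in N_\sigma(L_{k_0})\}|>(\sigma/3)^{n-1-t}\}$ by $\lesssim_{n,t}\sigma^{(n-1)+t-s}=\sigma^\gamma$ with $\gamma:=(n-1)+t-s>0$. (We may assume $t<n-1$, as otherwise $\delta^{n-1-t}\ge|\V|$ for all small $\delta$ and the conclusion is immediate; thus $n-1-t>0$, the superlevel sets are monotone in $\sigma$, and a dyadic pigeonhole converts ``bad at some $\sigma\in(0,\rho)$'' into ``bad at some dyadic $2^{-j}<\rho$'' up to an absolute constant.) Summing over scales, there is a set $E\subset\Sigma$ with $|E|\lesssim_{n,t}\rho^\gamma$ such that every $\xi\notin E$ satisfies $|\{v:(v,\xi)\in N_\sigma(L_{k_0})\}|\le(\sigma/3)^{n-1-t}$ for all $\sigma\in(0,\rho)$.

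Finally I would choose $\rho$ small enough that $|E|<|\dir(L_{k_0})|$, set $G:=\dir(L_{k_0})\setminus E$ and $L':=\{\ell\in L_{k_0}:\dir(\ell)\in G\}\subset L$, so $|\dir(L')|=|G|>0$, and verify $L'\in\mathrm{QStick}(t,\rho/3)$. Fix $\xi\in\Sigma$ and $\delta<\rho/3$; if $\{v:(v,\xi)\in N_\delta(L')\}$ is empty there is nothing to prove, and otherwise pick a representative direction $\xi^*\in\dir(L')\cap B(\xi,\delta)\subset G$. For any $v$ in this slice there is a witnessing curve $\ell_{\xi_1,v_1}\in L'\subset L_{k_0}$ with $\xi_1\in G$, $|\xi-\xi_1|<\delta$, and $|v-v_1|<\delta$, hence $|\xi^*-\xi_1|+|v-v_1|<3\delta$, so $(v,\xi^*)\in N_{3\delta}(L_{k_0})$. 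Therefore the slice is contained in $\{v:(v,\xi^*)\in N_{3\delta}(L_{k_0})\}$, whose measure is $\le(3\delta/3)^{n-1-t}=\delta^{n-1-t}$ because $\xi^*\notin E$ and $3\delta<\rho$. This is precisely the defining inequality of $\mathrm{QStick}(t,\rho/3)$, and since $L'\subset L$ and $|\dir(L')|>0$, the lemma follows with this $L'$ and $\rho/3$ in place of $\rho$. Apart from the representative-direction step and routine bookkeeping, every ingredient is exactly as in \cite{wangZahlstickyKakeya}.
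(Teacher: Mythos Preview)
Your proof is correct and follows exactly the approach the paper indicates, namely transcribing \cite[Lemma 2.2]{wangZahlstickyKakeya} with the parameter substitutions $\underline{p}\leadsto v$, $v\leadsto\xi$, $\mathcal L_n\leadsto\mathcal C(\phi)$. The representative-direction step you flag is part of that standard argument (needed to pass from control at $\xi\in\dir(L')$ to all $\xi\in\Sigma$) rather than a genuinely new ingredient, so the paper's one-line claim that the proof is ``exactly the same'' is accurate.
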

The proof of Lemma \ref{lem: sticky interp} is exactly the same as the proof of \cite[Lemma 2.2]{wangZahlstickyKakeya}, after replacing $\underline{p}$ with $v$, replacing $v$ with $\xi$, and replacing the metric space $\mathcal L_n$ with the metric space $\mathcal C(\phi)$. 
Then Lemma \ref{lem: sticky interp} is used to prove Proposition \ref{prop: discretized implies dimension} exactly as \cite[Lemma 2.2]{wangZahlstickyKakeya} is used to prove \cite[Proposition 2.1]{wangZahlstickyKakeya}. The necessary adjustments are, again, to replace $\mathcal L_n$ with $\mathcal C(\phi)$. One should also use that the statement 
\begin{align}
    \forall \eps'>0 \exists \eta',\delta_0>0: \SK(\phi,\eps',\eta', \delta_0)
\end{align}
translates to the statement $\sigma_n = 0$ in the case of lines in \cite{wangZahlstickyKakeya}.

\section{Proof of Theorem \ref{thm: sticky reduction formal}} \label{sec: sticky reduction}

We begin by giving some definitions and notation that will be used in this section. 
For $A,B\geq 0$, we will use the notation $A \lesssim B$ to mean that there exists a constant $C > 0$ such that $A \leq CB$, where $C$ depends on at most the diameters of $M,M_0, \V, \V_0, \Sigma,\Sigma_0,$ and upper and lower bounds of a fixed number of derivatives of $\phi$ over $M_0 \times \Sigma_0$. In practice, $5$ derivatives certainly suffice. We write $A \gtrsim B$ to mean there exists such a constant $C$ such that $A \geq C^{-1}B$. We write $A \sim B$ to mean $A \lesssim B$ and $A \gtrsim B$.

It will be useful for induction on scales to consider a version of Definition \ref{def: SK assertion} with $(\delta^{\kappa},\phi)$-tubes, for $\kappa \in (0,1]$. Recall that we assume the tubes $T = T^\delta_{\xi,v}$ satisfy $(\xi,v) \in \Sigma_0 \times \mathcal V_0$. 
\begin{definition}[Assertion $\SK'(\phi, \eps, \eta, \delta_0, \kappa)$]\label{def: SK' assertion}
    Define $\tau = \delta^{\kappa}$.
    The assertion $\SK'(\phi, \eps, \eta, \kappa, \delta_0)$ holds if for all $\delta \in (0, \delta_0)$ and pairs $(\T, Y)_\tau$ of $(\tau, \phi)$-tubes and their shadings satisfying
    \begin{enumerate}
        \item[(a)] the tubes in $\T$ are essentially distinct, 
        \item[(b)] for each $\tau \leq \rho \leq 1$, $\T$ can be covered by a set of $(\rho,\phi)$-tubes, at most $\tau^{-\eta}$ of which are essentially parallel to a common tube, and
        \item[(c)] $\sum_{T \in \T} |Y(T)| \geq \tau^\eta$,
    \end{enumerate}
    one has 
    \begin{align}
        |\bigcup_{T \in \T}  Y(T)| \geq \delta^{\eps/2} \tau^{\eps/2}.
    \end{align}
\end{definition}
The statement $\mathrm{SK'}(\phi, \eps,\eta, \delta_0, 1)$ is equivalent to $\mathrm{SK}(\phi, \eps, \eta, \delta_0)$. We will start at a value of $\kappa$ near $0$ and iterate $\kappa$ up to $1$. The parameters in the statements of Lemmas \ref{lem: base case} and \ref{lem: inductive step} below may depend on a fixed number of derivatives of $\phi$ over $M_0 \times \Sigma_0$ and the diameters of the sets $M,M_0,\V,\V_0,\Sigma,\Sigma_0$. The base case is as follows. 
\begin{lemma}\label{lem: base case}
    Fix $\eps > 0$. There exists $\eta =\eta_{\mathrm{base}} >0$, $\delta_0 = \delta_{0, \mathrm{base}}(\eps)>0$, and $\kappa = \kappa_{\mathrm{base}}(\eps) > 0$ a power of $2$, such that $\mathrm{SK'}(\phi, \eps, \eta, \kappa, \delta_0)$ holds.
\end{lemma}
\begin{proof}
    Take $\eta = 1$ and $\kappa, \delta_0$ to be fixed later. Let $(\T,Y)_\tau$ satisfy (a),(b),(c) in the hypothesis of $\mathrm{SK'}(\phi, \eps, \eta, \delta_0, \kappa)$. By (a), we have the bound $|\T| \lesssim \tau^{-2(n-1)}$. By (c), there is a tube $T \in \T$ with $|Y(T)| \geq |\T|^{-1} \tau \gtrsim \tau^{2n}$. Thus 
    \begin{align}\label{eq: base case 1}
        |\bigcup_{T \in \T} Y(T)| \geq |Y(T)| \gtrsim \tau^{2n}.
    \end{align}
    If we choose $\kappa$ to be the largest power of 2 less than $\eps / (8n)$ and take $\delta_0$ small enough depending on $\eps$ and the implied constant in \eqref{eq: base case 1}, we find 
    \begin{align}
        |\bigcup_{T \in \T} Y(T)| \geq \delta^{\eps/2} \tau^{\eps/2}.
    \end{align}
\end{proof}
We will pass from $\kappa = \kappa_{\mathrm{base}}(\eps)$ to $\kappa = 1$ by iterating the following lemma. 
\begin{lemma}\label{lem: inductive step}
    Suppose that $\phi$ satisfies Bourgain's condition. 
    Fix $\eps, \eta, \delta_0, \kappa,\tilde \eta, \tilde \delta_0 > 0$ satisfying $\tilde \eta \leq \eta$. Then there exists $\delta'_0 = \delta_{0,\mathrm{ind}}(\eps, \eta, \delta_0, \kappa,\tilde \eta, \tilde \delta_0) > 0$ such that 
    \begin{align}
        \mathrm{SK}'(\phi, \eps, \eta, \delta_0, \kappa) \wedge \mathrm{SK}(\phi_{n,\mathrm{rest}}, \eta/100, \tilde \eta, \tilde \delta_0) \implies \mathrm{SK'}(\phi, \eps, \tilde \eta / 100,\delta_0',2\kappa).
    \end{align}
\end{lemma}

\begin{proof}
    \noindent \textbf{Step 1: Applying Theorem \ref{thm: geometric characterization of Bourgain} and setup}

    Since $\phi$ satisfies Bourgain's condition, we can apply Theorem \ref{thm: geometric characterization of Bourgain} to obtain the maps 
    $F_{\xi_0,v_0},\Xi_{\xi_0,v_0},V_{\xi_0,v_0}$. Assume the normalizations in Remark \ref{remark: data normalizations} as well. 
    
    Since the map $F_{\xi_0,v_0}$ is a smooth local diffeomorphism which varies smoothly in the parameter $(\xi_0,v_0)$, depends only on $\phi$, and the sets $M_0, \Xi_0, \V_0$ are compact, we have the bounds 
    \begin{align}\label{eq: uniform bounds F}
        |\partial^\alpha F_{\xi_0,v_0}(\x)| \lesssim 1 \text{ and }
        |\det(\nabla_\x F_{\xi_0,v_0}(\x))| \sim 1 ,
    \end{align}
    uniformly over $(\x,\xi_0, v_0) \in M_0 \times \Sigma_0 \times \V_0$ and $|\alpha|\leq 3$.
    Similarly, we have the bounds 
    \begin{align}\label{eq: uniform bounds V and Xi}
        &|\partial^\alpha \Xi_{\xi_0,v_0}(\xi)|, |\partial^\alpha V_{\xi_0,v_0}(\xi,v)| \lesssim 1 \text{ and}
        \\& 
        |\det(\nabla_\xi\Xi_{\xi_0,v_0}(\xi))|, |\det(\nabla_v V_{\xi_0,v_0}(\xi,v))| \sim 1,
    \end{align}
    uniformly over $(\xi_0,v_0, \xi, v) \in (\Sigma_0 \times \V_0)^2$ and $|\alpha|\leq 3$.
    Recall that the coaxial curve of $T$ has parameters in $\Sigma_0 \times \V_0$ and $Y(T) \subset M_0$, for each $T \in \T$. 

    Since we have these uniform bounds, we can apply the inverse function theorem to find $\eps_0 \sim 1$ such that for any $(\xi_0, v_0) \in \Sigma_0 \times \V_0$ and $\x_0 \in \ell_{\xi_0,v_0} \cap M_0$, the following restricted maps, which we will work with for the remainder of the argument, are diffeomorphisms onto their images: 
    \begin{align}
        F_{\xi_0, v_0} &: B^n(\x_0, \eps_0) \to \R^n, \\
        \Xi_{\xi_0,v_0} &: B^{n-1}(\xi_0, \eps_0) \to \R^{n-1}, \\
        (\Xi_{\xi_0,v_0}, V_{\xi_0,v_0}) &: B^{n-1}(\xi_0,\eps_0) \times B^{n-1}(v_0,\eps_0) \to \R^{2(n-1)}.
    \end{align}
    Then using \eqref{eq: bourgain characterization} in Theorem \ref{thm: geometric characterization of Bourgain}, there exists $C,c \sim 1$ such that if $\delta \leq c$ and $T^{\delta^{1/2}}_{\xi_0,v_0}$ covers $T^{\delta}_{\xi,v}$, then $F_{\xi_0,v_0}(T^\delta_{\xi,v} \cap M_0) \subset T^{C\delta}_{\mathrm{line}, \Xi_{\xi_0,v_0}(\xi),V_{\xi_0,v_0}(\xi,v)}$. 
    For each $(\xi_0,v_0)$, we apply a 
    harmless dilation by $1/C$ around $\mathrm{line}_{\Xi_{\xi_0,v_0}(\xi),V_{\xi_0,v_0}(\xi,v)} = \mathrm{line}_{0,0}$, adjusting $F_{\xi_0,v_0}, \Xi_{\xi_0,v_0}$, and $V_{\xi_0,v_0}$ accordingly, to get the cleaner inclusion 
    \begin{align}\label{eq: curved tubes to straight tubes}
        F_{\xi_0,v_0}(T_{\xi,v}^\delta \cap M_0) \subset T^\delta_{\mathrm{line}, \Xi_{\xi_0,v_0}(\xi),V_{\xi_0,v_0}(\xi,v)}.
    \end{align}

    Let $(\T,Y)_\tau$ satisfy (a),(b),(c) in the hypothesis of $\mathrm{SK}'(\phi, \eps, \tilde \eta/100, \delta_0', 2\kappa)$, where we start by assuming $\delta_0' \leq \min({\tilde \delta_0, \delta_0'})$. We will further shrink $\delta_0'$ depending on the allowed parameters in the course of the argument. 
    Define $\rho = \tau^{1/2}$. By (b), we may cover $\T$ by a set of 
    $(\rho,\phi)$-tubes $\T_\rho$, at most $\tau^{-\tilde \eta/100}$ of which are parallel to a common tube. 
    Thus $|\T_\rho| \lesssim \tau^{-\tilde \eta/100} \rho^{-(n-1)}$. Applying (b) at scale $\tau$, we have $|\T[T_\rho]| \lesssim \tau^{-\tilde \eta /100} (\rho/\tau)^{n-1}$.
    Defining
    \begin{align}
        \T_\rho' = \{T_\rho\in\T_\rho : \sum_{T\in\T[T_\rho]} |Y(T)| \geq c_0 \tau^{\tilde \eta /50}\rho^{n-1}\}
    \end{align}
    with $c_0 \gtrsim 1$ chosen sufficiently small, we get from (c) that 
    $|\T_\rho'| \gtrsim \tau^{\tilde \eta /50} \rho^{-(n-1)}$.
    Using that at most $\tau^{\tilde \eta / 100}$ of the tubes in $\T_\rho'$ are parallel to a common tube, we can find $\T_\rho'' \subset \T_\rho'$ with all tubes essentially distinct and 
    \begin{align}
        |\T_\rho''| \gtrsim \tau^{3\tilde \eta /100} \rho^{-(n-1)}.
    \end{align}
    Below we will begin by studying $\T[T_\rho]$ for each $T_\rho \in \T_\rho''$. 

    \noindent \textbf{Step 2: Applying classical sticky Kakeya to a rescaled and distorted $\T[T_\rho]$}

    Fix $T_\rho \in \T_\rho''$, let $\ell_{\xi_0,v_0}$ be its coaxial $\phi$-curve, and write $F=F_{\xi_0,v_0},\Xi=\Xi_{\xi_0,v_0},V=V_{\xi_0,v_0}$. We work in the $\eps_0$-balls defined in Step 1.
    By choosing $\delta_0' \gtrsim 1$ small enough, we can work in the $\eps_0$-balls in parameter space. By pigeonholing, there is a ball $B = B^n(\x_0, \eps_0)$ with $\x_0 \in \ell_{\xi_0, v_0} \cap M_0$ such that 
    \begin{align}
        \sum_{T \in \T[T_\rho]} |Y(T) \cap B| \gtrsim \sum_{T \in \T[T_\rho]} |Y(T)|.
    \end{align}
    For each $T \in \T[T_\rho]$, define the shading $Y'(T) = Y(T) \cap B$. Thus we can work in an $\eps_0$-ball in $M$. 
    Defining
    \begin{align}
        \T[T_{\rho}]' = \{T \in \T[T_\rho] : |Y'(T)| \geq c_1 \tau^{3 \tilde \eta/100} \tau^{n-1}\}
    \end{align}
    with $c_1 \gtrsim 1$ sufficiently small, we have $|\T[T_\rho]'| \gtrsim \tau^{\tilde \eta /50} (\rho/\tau)^{n-1} = \tau^{\tilde \eta /50} \rho^{-(n-1)}$. We now use the maps $F$ and $P:=(\Xi,V)$ and \eqref{eq: curved tubes to straight tubes} to define a new collection of straight $\tau$-tubes and shadings: 
    \begin{align}
        \T[T_\rho]'' &:= \{T^\tau_{\mathrm{line}, P(\xi,v)} : T^\tau_{\xi,v} \in \T[T_\rho]'\}, \\
        Y''(T) &:=F(Y'(T^\tau_{P^{-1}(\xi,v)})) \subset T, \text{ for each $T=T^\tau_{\mathrm{line},\xi,v} \in \T[T_\rho]''$}. 
    \end{align}
    Since $F$ is a diffeomorphism with Jacobian $\sim 1$, we have $|Y''(T)| \gtrsim \tau^{3\tilde \eta/10} \tau^{n-1}$.
    After possibly refining $\T[T_\rho]''$ by a constant factor, we may assume the tubes are essentially distinct (using that $P$ has Jacobian $\sim 1$). Recall that $(\T,Y)$ satisfies (b) and $\T[T_\rho]' \subset \T$, so in particular for each $\tau \leq \rho' \leq \rho$, $\T[T_\rho]'$ can be a covered by a set of $(\rho', \phi)$-tubes, at most $\tau^{-\tilde \eta/100}$ of which are parallel to a common tube. Since $\Xi$ has Jacobian $\sim 1$, the same is true for $\T[T_\rho]''$ after accepting a constant factor more tubes parallel to a common tubes. Consider $j(x,t)=(\rho^{-1}x,t)$, the radial dilation by $\rho^{-1}$ around $\mathrm{line}_{0,0}$. We define the collection of straight $\rho$-tubes and shadings: 
    \begin{align}
        \T_1 &:=\{T^{\rho}_{\mathrm{line},\rho^{-1}\xi,\rho^{-1}v} : T_{\xi,v}^\tau \in \T[T_\rho]''\},\label{eq: T1 def} \\
        Y_1(T) &:= j(Y''(T_{\mathrm{line}, \rho \xi,\rho v}^\tau)) \subset T, \text{for each $T = T_{\mathrm{line}, \xi, v}^{\rho} \in \T_1$}.
    \end{align}
    We will now show that $(\T_1, Y_1)$ satisfies the hypotheses (a),(b),(c) in the statement $\mathrm{SK}(\phi_{n,\mathrm{rest}}, \eta/100, \tilde \eta, \tilde \delta_0)$. 
    That (a) holds is clear. That (b) holds follows from the analogous statement for $\T[T_\rho]''$ shown above, and the parameter rescaling $(\xi,v) \mapsto(\rho^{-1}\xi, \rho^{-1} v)$ in \eqref{eq: T1 def} (and after shrinking $\delta_0'$ slightly to absorb the implicit constant). 
    The rescaling $j$ gives $|Y_1(T)| \gtrsim \rho^{-(n-1)} \cdot \tau^{3 \tilde \eta/100} \tau^{n-1} = \tau^{3\tilde \eta /100} \rho^{n-1}$. We also have $|\T_1| \gtrsim |\T[T_\rho]'| \gtrsim \tau^{\tilde \eta/50} \rho^{-(n-1)}$. Thus 
    \begin{align}\label{eq: (c) rescaled}
        \sum_{T \in \T_1} |Y_1(T)| \gtrsim \tau^{\tilde \eta/20}.
    \end{align}
    After shrinking $\delta_0'$ depending on $\kappa$ and the implicit constant, the left hand side of \eqref{eq: (c) rescaled} is $\geq \rho^{\tilde \eta}$. Thus (a),(b),(c) hold and we may conclude 
    \begin{align}\label{eq: straight lower bd}
        |\bigcup_{T \in \T_1} Y_1(T)| \geq \rho^{\eta/100}.
    \end{align}
    By composing the set in \eqref{eq: straight lower bd} with the maps $j^{-1}$ and $F^{-1}$, we get 
    $|\bigcup_{T \in \T[T_\rho]} Y(T)| \gtrsim \rho^{n-1}|\bigcup_{T\in \T_1} Y_1(T)|$. 
    After shrinking $\delta_0'$ to absorb the implicit constant, 
    \begin{align}\label{eq: union in rho tube}
        |\bigcup_{T \in \T[T_\rho]} Y(T)| \geq \rho^{n-1} \rho^{\eta/50}. 
    \end{align}

    \noindent \textbf{Step 3: Applying sticky $\phi$-Kakeya at scale $\rho$ and conclusion}

    Our goal is to apply $\mathrm{SK'}(\phi, \eps,\eta, \delta_0, \kappa)$ to the pair $(\T_2,Y_2)_\rho$, where 
    \begin{align}
        \T_2 &:= \T_\rho'', \\
        Y_2(T_{\rho}) &:= \bigcup_{T \in \T[T_\rho]} Y(T) \subset T_\rho, \text{ for each $T_\rho \in \T_2$.}
    \end{align}
    We verify that (a),(b),(c) in the hypothesis of that statement hold.
    We have shown (a) holds at the end of Step 1. Now we show (b). Fix
    $\tilde \rho \in [\rho, 1]$. If $\tilde \rho \in [\rho,2\rho]$ the statement is obvious, so we assume $\tilde \rho \in [2\rho,1]$.
    We know $\T$ can be covered by a set of $(\tilde \rho/2, \phi)$-tubes, at most $\tau^{-\tilde \eta/100}$ are parallel to a common tube. Let $\T_{\tilde \rho}$ be the tubes with the same coaxial curves as in the cover above but with radius $\tilde \rho$. Then $\lesssim \tau^{-\tilde \eta/100} = \rho^{-\tilde \eta/50}$ tubes are parallel to a common tube. 
    That $\T_{\tilde \rho}$ covers $\T_2$ follows directly from the triangle inequality for the metric $d$ on $\phi$-curves. Thus we have verified (b) (after possibly shrinking $\delta_0'$ to absorb the implicit constant and using $\tilde \eta \leq \eta$). By \eqref{eq: union in rho tube}, the bound $|\T_2| \gtrsim \tau^{3 \tilde \eta / 100} \rho^{-(n-1)}$, and using $\tilde \eta \leq \eta$, 
    \begin{align}
        \sum_{T \in \T_2} |Y_2(T)| \gtrsim \rho^{8\eta/100}. 
    \end{align}
    Again shrinking $\delta_0'$ to absorb the implicit constant, the left hand side is $\geq \rho^\eta$. This proves that (c) holds. Finally we may apply the conclusion of $\mathrm{SK'}(\phi, \eps,\eta, \delta_0, \kappa)$ to obtain 
    \begin{align}
        |\bigcup_{T \in \T} Y(T)| \geq |\bigcup_{T_\rho \in \T_2} Y_2(T_\rho)| &\geq \delta^{\eps/2} \rho^{\eps/2} \\ 
        &\geq \delta^{\eps/2} \tau^{\eps/2}.
    \end{align}
\end{proof}
We are now ready to prove Theorem \ref{thm: sticky reduction formal}. 
\begin{proof}[Proof of Theorem \ref{thm: sticky reduction formal}]
We assume Conjecture \ref{conj: discr classical sticky Kakeya conj} holds, so we can produce the constants $\eta_{n,\rest}(\eps)$ and $\delta_{0,n,\rest}(\eps)$.
Fix $\eps > 0$ and define the sequences $\eta_i,\delta_{0,i}, \kappa_i, \eps_i', \eta_i',\delta_{0,i}'$ inductively as follows. Set $\eta_1=\eta_{\mathrm{base}}$, $\delta_{0,1} = \delta_{0,\mathrm{base}}(\eps)$, $\kappa_1=\kappa_{\mathrm{base}}(\eps)$, $\eps_1' = \eta_1/100$, $\eta_1' = \min(\eta_{n,\mathrm{rest}}(\eps_1'),\eta_1)$, $\delta_{0,1}' = \delta_{0,n, \mathrm{rest}}(\eps_1')$. Once $\eta_i, \delta_{0,i}, \kappa_i, \eps_i', \eta_i', \delta_{0,i}'$ have been defined, we set 
\begin{align}
    \eta_{i+1} &= \eta_i'/100, \\
    \delta_{0,i+1} &=\delta_{0, \mathrm{ind}}(\eps, \eta_i,\delta_{0,i}, \kappa_i,\eta'_i,\delta_{0,i}'),\\
    \kappa_{i+1}&= 2\kappa_i, \\
    \eps_{i+1}' &= \eta_{i+1}/100, \\
    \eta_{i+1}' &= \min(\eta_{n, \mathrm{rest}}(\eps_{i+1}'),\eta_{i+1}), \\
    \delta_{0,i+1}' &=\delta_{0,n,\mathrm{rest}}(\eps_{i+1}').
\end{align}
We define these sequences up to $N = \log_2(1/\kappa_1)$, so $\kappa_N=1$. Now Lemma \ref{lem: base case} asserts that $\mathrm{SK}'(\phi, \eps, \eta_1, \delta_{0,1}, \kappa_1)$ is true, and Lemma \ref{lem: inductive step} says  
\begin{align}
    \mathrm{SK}'(\phi, \eps, \eta_i, \delta_{0,i}, \kappa_i) \wedge  \mathrm{SK}(\phi, \eps_i', \eta_i', \delta_{0,i}') \implies \mathrm{SK'}(\phi, \eps, \eta_{i+1}, \delta_{0,i+1}, \kappa_{i+1}).
\end{align}
As we assume Conjecture \ref{conj: discr classical sticky Kakeya conj} holds, this gives  
\begin{align}
    \mathrm{SK}'(\phi, \eps, \eta_i, \delta_{0,i}, \kappa_i)  \implies \mathrm{SK'}(\phi, \eps, \eta_{i+1}, \delta_{0,i+1}, \kappa_{i+1}).
\end{align}
Iterating this $N-1$ times we find that $\SK'(\phi, \eps, \eta_N,\delta_{0,N}, 1)$ is true, and this is the statement $\SK(\phi, \eps, \eta_N, \delta_{0,N})$. Setting $\eta_{\phi}(\eps)=\eta_{0,N}$ and $\delta_{0,\phi}(\eps) = \delta_{0,N}$, we are done. 
\end{proof}

\section{The $\tan$-example and Proposition \ref{prop: tan example curved}}\label{sec: tan example}

\subsection{Constructing Example \ref{ex: tan example} from Proposition \ref{prop: mild reformulation Bourgain}}

One can build Example \ref{ex: tan example} by choosing a simple ansatz for \eqref{eq: ABc equation}. We first choose a special form of $\phi$ to reduce to solving a system of decoupled ODEs. 
For $1 \leq j \leq n-1$, let $f_j(x_j,t,\xi_j) : \R^3 \to \R$ be functions to be determined later, and define 
\begin{align}
    \phi(\x,\xi) &= \sum_{j=1}^{n-1}\int f_j(x_j,t,\xi_j) d\xi_j.
\end{align}
We also choose the following ansatz for $A$,$B$, and $c$: 
\begin{align}
    A(v,\xi) &= 
    \begin{pmatrix}
        0 & 0 \\
        0 & v_{n-1}^2
    \end{pmatrix},\\
    B(v,\xi) &= I_{n-1},\\
    c(\x,\xi) &= t^2.
\end{align}
Then \eqref{eq: ABc equation} becomes the following decoupled system of $n-1$ ODEs:
\begin{align}
    \partial_{\xi_j} f_j &= t^2 \text{\quad for } 1 \leq j \leq n-2, \\
    \partial_{\xi_{n-1}} f_{n-1} &= f_{n-1}^2 + t^2.
\end{align}
The following choices of $f_j$ solve these equations
\begin{align}
    f_j(x_j,t,\xi_j) &= t^2 \xi_{j} + x_{j}, \\
    f_{n-1}(x_{n-1},t,\xi_{n-1}) &= t \tan(t\xi_{n-1} + x_{n-1}),
\end{align}
and furthermore one may compute $\det \nabla_\x\nabla_\xi \phi(\x,\xi) \neq 0$ for $(\x,\xi)$ in a neighborhood of $((0,1),0)$. 
After integrating the functions $f_j$, we obtain
\begin{align}
    \phi(\x,\xi) = x' \cdot \xi' + \tfrac{1}{2} t^2|\xi'|^2 + \log(\sec(t\xi_{n-1} + x_{n-1})). 
\end{align}
It is also easy to check that $(G(\x,\xi) \cdot \nabla_\x) c(\x,\xi) \neq 0$ for $(\x,\xi)$ in a neighborhood of $((0,1), 0)$. Since $B$ is also positive-definite, Proposition \ref{prop: mild reformulation Bourgain} shows that $\phi$ is a positive-definite phase function satisfying Bourgain's condition. 

\subsection{The family of $\phi_{n,\tan}$-curves}

With $\phi = \phi_{n,\tan}$, we compute 
\begin{align}
    \nabla_\xi \phi(\x,\xi) = (x' + t^2 \xi', t \tan(t \xi_{n-1} + x_{n-1})).
\end{align}
Writing $v = (v',v_{n-1}) \in \R^{n-2} \times \R$, we need to compute $X(\xi,v,t)$, the unique solution $x$ to $\nabla_\xi \phi(x, t,\xi) = v$. That is, 
\begin{align}
    x' + t^2 \xi' &= v', \\
    t \tan(t \xi_{n-1} + x_{n-1}) &= v_{n-1}. 
\end{align}
Solving, we find 
\begin{align}
    x' &= v' - t^2 \xi', \\
    x_{n-1} &= \tan^{-1}(\frac{v_{n-1}}{t}) - t\xi_{n-1},
\end{align}
as long as $|t - 1| \leq 1/10$ and $|\xi|,|v| \leq 1/2$. Thus $X(\xi,v,t) = (v' - t^2\xi', \tan^{-1}(\frac{v_{n-1}}{t}) - t\xi_{n-1})$ and 
\begin{align}
    \ell_{\xi,v} = \{(v' - t^2\xi', \tan^{-1}(\frac{v_{n-1}}{t}) - t\xi_{n-1},t) : |t - 1| \leq 1/10 \}.
\end{align}

\subsection{Illustration of Theorem \ref{thm: geometric characterization of Bourgain} with $\phi_{n,\tan}$}
Before proving Proposition \ref{prop: tan example curved}, let us illustrate the forward direction of Theorem \ref{thm: geometric characterization of Bourgain} for $\phi_{n,\tan}$ by exhibiting a diffeomorphism to lines up to quadratic error. Fix a central curve $\ell_{\xi_0,v_0}$. Write $\xi_0 = (\xi_0',\xi_{0,n-1})$ and $v_0 = (v_0', v_{0,n-1})$. Taylor expanding $\tan^{-1}(v_{n-1}/t)$ to first order in $v_{n-1}$ near $v_{0,n-1}$, we find  
\begin{align}
    \ell_{\xi,v} = \{ 
        &(v' - t^2 \xi', \tan^{-1}(\frac{v_{0,n-1}}{t}) + \frac{t}{t^2 + v_{0,n-1}}(v_{n-1} - v_{0,n-1}) - t\xi_{n-1} \\
        &+ O(|v_{n-1}-v_{0,n-1}|^2),t) 
    : |t - 1| \leq 1/10\}.\nonumber
\end{align}
We may first apply the shift in each $t$-slice $F_1(\x) = (x',x_{n-1} - \tan^{-1}(\frac{v_{0,n-1}}{t}),t)$, second apply the shear in each $t$-slice $F_2(\x) = (x', \frac{t^2 + v_{0,n-1}}{t} x_{n-1},t)$, and third apply the diffeomorphism in the $t$-direction $F_3(\x) = (x,t^2)$ to get 
\begin{align}
    (F_3 \circ F_2\circ F_1)(\ell_{\xi,v}) &= \{(v' - t^2 \xi', v_{n-1} - v_{0,n-1}(1 + \xi_{n-1}) - t^2 \xi_{n-1} \\&\ \quad+ O(|v_{n-1}-v_{0,n-1}|^2, t^2)
     : |t - 1| \leq 1/10 \} \nonumber \\
    &\subset \mathrm{line}_{\Xi(\xi), V(\xi,v)} + O(|(\xi,v) -(\xi_0,v_0)|^2),
\end{align}
where $\Xi(\xi) = \xi$ and $V(\xi,v) = (v', v_{n-1}-v_{0,n-1}(1+\xi_{n-1}))$.

\subsection{Proof of Proposition \ref{prop: tan example curved}}

Fix $\eps_0 > 0$ and assume for contradiction that there is a diffeomorphism $F : B_{\eps_0}(0_M) \to \R^n$ onto its image and functions $\Xi,V$ such that for $(\xi,v)$ near $(0,0)$,
\begin{align}\label{eq: 1 tan curved}
    F(\ell_{\xi,v} \cap B_{\eps_0}(0_M)) \subset \mathrm{line}_{\Xi(\xi,v), V(\xi,v)} + O(|(\xi,v)|^4).
\end{align}
By adjusting the curve $\ell_{\xi,v}$ by an amount $O(|(\xi,v)|^4)$ in each $t$-slice, we obtain a curve $\tilde \ell_{\xi,v}$ satisfying 
\begin{align}\label{eq: 2 tan curved}
    F(\tilde \ell_{\xi,v} \cap B_{\eps_0}(0_M)
    ) \subset \mathrm{line}_{\Xi(\xi,v), V(\xi,v)}.
\end{align}
We will obtain a contradiction as follows. We consider the $1$-parameter family of curves $\tilde \ell_{\xi_{\mathbf p}(s),v_{\mathbf p}(s)}$ passing through $\tilde \ell_{0,0}$ and a point $\mathbf{p} \notin \tilde \ell_{0,0}$ (we will see why $\xi_{\mathbf p}(s), v_{\mathbf p}(s)$ exist locally and are smooth in Step 2 below). We will show that the tangent directions $\gamma(s)$ of the curves $\tilde \ell_{\xi_{\mathbf p}(s),v_{\mathbf p}(s)}$ at $\mathbf{p}$ are \emph{not} all contained in a 2-dimensional vector space. Concretely, we will show that $\gamma(s) \wedge \dot \gamma(s) \wedge \ddot \gamma(s) \neq 0$ for a fixed $s$. 
This is impossible by \eqref{eq: 2 tan curved}, since the lines passing through a fixed line and a point not on that line must be contained in a plane. To make the parameterization of the curves $\tilde \ell_{\xi_{\mathbf p}(s),v_{\mathbf p}(s)}$ easier to compute, we will first simplify the curves $\tilde \ell_{\xi,v}$ via a Taylor expansion and a diffeomorphism. Then we will write down the parameterization, and finally show that the tangent directions through $\mathbf{p}$ are not contained in a $2$-plane. The details are as follows.

\begin{proof}

\noindent \textbf{Step 1: Simplifying the family of curves.}

Each $\tilde \ell_{\xi,v}$ (restricted to $B_{\eps_0}(0_M)$) takes the form 
\begin{align}
    \tilde \ell_{\xi,v} &= \{(X(\xi,v,t) + O(|(\xi,v)|^4) ,t) :  |t - 1| < \eps_0\} \\
    &= \{(v' - t^2 \xi'+O(|(\xi,v)|^4), \tan^{-1}(\frac{v_{n-1}}{t})-t\xi_{n-1} + O(|(\xi,v)|^4),t) : |t-1| < \eps_0\}.
\end{align}
By Taylor expanding in $v_{n-1}$ to order $3$ near 0,
\begin{align}
    \tilde \ell_{\xi,v} &= \{(v' - t^2 \xi' + O(|(\xi,v)|^4), \frac{v_{n-1}}{t} - \frac{v_{n-1}^3}{3t^3} - t\xi_{n-1} + O(|(\xi,v)|^4), t) :|t - 1| < \eps_0 \}.
\end{align}
After replacing $F$ with $F \circ H^{-1}$ and $\tilde \ell_{\xi,v}$ with $H \circ \tilde \ell_{\xi,v}$, where $H$ is the diffeomorphism defined by $H(\x) = (x',t^3 x_{n-1},t^2)$, \eqref{eq: 2 tan curved} continues to hold (with a possibly smaller value of $\eps_0$). Now $\tilde \ell_{\xi,v}$ has the parameterization
\begin{align}\label{eq: tilde l parameterized}
    \tilde \ell_{\xi,v}(t) = (v'-t\xi' + O(|(\xi,v)|^4), t v_{n-1} - \frac{v_{n-1}^3}{3}-t^2 \xi_{n-1} + O(|(\xi,v)|^4), t).
\end{align}

\noindent \textbf{Step 2: The parameters of a curve $\tilde \ell_{\xi,v}$ through a point $\mathbf p$ and $\tilde \ell_{0,0}$.}

Consider the point $\mathbf{p} = (p, t_0)=(p', p_{n-1}, t_0) \in B_{\eps_0}(0_M)$, where $t_0 > 1$. We need to solve for the parameters $\xi_\mathbf{p}(s),v_{\mathbf{p}}(s)$ such that 
\begin{align}\label{eq: 1 step 1 tan curved}
    \tilde \ell_{\xi_{\mathbf{p}}(s), v_{\mathbf{p}}(s)}(s) &= \tilde \ell_{0,0}(s),  \\
    \tilde \ell_{\xi_{\mathbf p}(s), v_{\mathbf p}(s)}(t_0) &= \mathbf p. \label{eq: 1 step 1 tan curved 2}
\end{align}
We first check that $\xi_{\mathbf p}(s), v_{\mathbf p}(s)$ exist locally near $(s, \mathbf p) = (1,0)$ and are smooth. Let $\tilde X(\xi,v,t)$ denote the first $n-1$ coordinates of \eqref{eq: tilde l parameterized} and define $H(\xi,v,s) = (\tilde X(\xi,v,s)-\tilde X(0,0,s), \tilde X(\xi,v,t_0) - p)$. Clearly $H$ is smooth and $H(0,0,1) = 0$. By the implicit function theorem, we have the desired functions if $\nabla_{\xi,v} H(0,0,1)$ is invertible. One may compute 
\begin{align}
    \nabla_{\xi,v} H(0,0,1) &=
    \begin{pmatrix}
        -I_{n-2} & 0 & I_{n-2} & 0 \\
        0 & -1 & 0 & 1\\
        -t_0 I_{n-2} & 0 & I_{n-2} & 0 \\
        0 & -t_0^2 & 0 & t_0
    \end{pmatrix},
\end{align}
which is invertible when $t_0 \neq 1$ (by performing row and column operations for instance).

We consider $s$ near 1 and solve for $\xi_{\mathbf p}(s)$ and $v_{\mathbf p}(s)$ to third order in $p$ near 0 (with $t_0 > 1$ fixed). We have $\xi_0(s)=v_0(s)=0$, so $\xi_{\mathbf p}(s), v_{\mathbf p}(s) = O(|p|)$. Below we will suppress the ``$\mathbf p$'' in $\xi_{\mathbf p}(s)$ and $v_{\mathbf p}(s)$ for ease of notation. 
In coordinates, \eqref{eq: 1 step 1 tan curved} and \eqref{eq: 1 step 1 tan curved 2} give the following system of equations:
\begin{align}
    v'(s) - s\xi'(s) + O(|p|^4) &= 0, \label{eq: prime eq s} \\
    v'(s) - t_0\xi'(s) + O(|p|^4) &= p', \label{eq: prime eq t0} \\
    s v_{n-1}(s) - \frac{v_{n-1}(s)^3}{3}-s^2 \xi_{n-1}(s) + O(|p|^4) &= 0, \label{eq: n-1 eq s} \\
    t_0 v_{n-1}(s) - \frac{v_{n-1}(s)^3}{3}-t_0^2 \xi_{n-1}(s) + O(|p|^4) &= p_{n-1}.\label{eq: n-1 eq t0}
\end{align}
Solving for $\xi'(s)$ using \eqref{eq: prime eq s} and \eqref{eq: prime eq t0}, we get 
\begin{align}
    \xi'(s) &= -\frac{1}{t_0-s}p' + O(|p|^4), \label{eq: xi' expr}\\
    v'(s) &= -\frac{s}{t_0-s}p' + O(|p|^4). \label{eq: v' expr}
\end{align}
Using \eqref{eq: n-1 eq s} to solve for $\xi_{n-1}(s)$ in terms of $v_{n-1}(s)$, 
\begin{align}\label{eq: xi n-1 in terms of v n-1}
    \xi_{n-1}(s) &= \frac{v_{n-1}(s)}{s} - \frac{v_{n-1}(s)^3}{3s^2} + O(|p|^4). 
\end{align}
Substituting \eqref{eq: xi n-1 in terms of v n-1} into \eqref{eq: n-1 eq t0} and simplifying the result, we obtain the following cubic equation in $v_{n-1}(s)$: 
\begin{align}\label{eq: cubic in v n-1}
    \frac{1}{3} (\frac{t_0^2}{s^2} - 1) v_{n-1}(s)^3 - t_0(\frac{t_0}{s} - 1) v_{n-1}(s)- p_{n-1} + O(|p|^4)= 0.
\end{align}
Solving \eqref{eq: cubic in v n-1} iteratively in powers of $p_{n-1}$, we obtain 
\begin{align}
    v_{n-1}(s) &=  -\frac{1}{t_0(\frac{t_0}{s}-1)}p_{n-1}-\frac{1}{3} \frac{(\frac{t_0^2}{s^2} - 1)}{t_0^4(\frac{t_0}{s}-1)^4} p_{n-1}^3 + O(|p|^4) \\
    &= - \frac{s}{t_0(t_0-s)}p_{n-1} - \frac{s^2(t_0+s)}{3t_0^4(t_0-s)^3}p_{n-1}^3 + O(|p|^4). \label{eq: vn-1 expr}
\end{align}
Now we substitute \eqref{eq: vn-1 expr} back into \eqref{eq: xi n-1 in terms of v n-1} to obtain 
\begin{align}\label{eq: xin-1 expr}
    \xi_{n-1}(s) = -\frac{1}{t_0(t_0-s)}p_{n-1} - \frac{s^2}{3t_0^4 (t_0-s)^3}p_{n-1}^3 + O(|p|^4).
\end{align}

\noindent \textbf{Step 3: The curves through $\tilde \ell_{0,0}$ and $\mathbf p \notin \tilde \ell_{0,0}$ are not contained in a surface.}

Define $\gamma(s) = \frac{d}{dt} \tilde \ell_{\xi(s), v(s)}(t_0)$, the tangent vector at point $\mathbf p$ of the curve passing through $\tilde \ell_{0,0}(s)$ and $\mathbf p$. 
We compute 
\begin{align}
    \gamma(s) = &(-\xi'(s)+O(|p|^4),v_{n-1}(s)-2t_0\xi_{n-1}(s) + O(|p|^4),1) \\
    =  & \Big (\frac{1}{t_0-s}p'+O(|p|^4), \\
    &\frac{2t_0-s}{t_0(t_0-s)}p_{n-1} + \frac{s^2}{3t_0^4 (t_0-s)^2} p_{n-1}^3 + O(|p|^4),
    1 \Big ).
\end{align}
We may now compute $\gamma(1), \dot \gamma(1), \ddot \gamma(1)$:
\begin{align}
    \gamma(1) = &\Big (\frac{1}{t_0-1}p' + O(|p|^4),\\
    &\frac{2t_0-1}{t_0(t_0-1)}p_{n-1} + \frac{1}{3t_0^4(t_0-1)^2}p_{n-1}^3 + O(|p|^4),
    1\Big ),\\
    \dot \gamma(1) = &\Big (\frac{1}{(t_0-1)^2} p'+O(|p|^4),\\
    &\frac{1}{(t_0-1)^2}p_{n-1} + \frac{2}{3 t_0^3 (t_0-1)^3}p_{n-1}^3 + O(|p|^4),
    0\Big ), \\
    \ddot \gamma(1) = &\Big (\frac{2}{(t_0-1)^3}p' + O(|p|^4), \\
    &\frac{2}{(t_0-1)^3}p_{n-1} + \frac{2(2+t_0)}{3 t_0^3 (t_0-1)^4}p_{n-1}^3 + O(|p|^4), 
    0\Big ).
\end{align}
Choose $p' = (0,\ldots,0,p_{n-2})$ so that $\gamma(1), \dot \gamma(1), \ddot \gamma(1) \in \{0\}^{n-3} \times \R^3$, and we may view these as vectors in $\R^3$. We may finally compute 
\begin{align}
    |\gamma(1) &\wedge \dot \gamma(1) \wedge \ddot \gamma(1)| = |\det(\gamma(1), \dot \gamma(1), \ddot \gamma(1))| \\
    &= \left |\det 
\begin{pmatrix}
    \frac{1}{(t_0-1)^2} p'+O(|p|^4) & \frac{1}{(t_0-1)^2}p_{n-1} + \frac{2}{3 t_0^3 (t_0-1)^3}p_{n-1}^3 + O(|p|^4) \\
    \frac{2}{(t_0-1)^3}p' + O(|p|^4) & \frac{2}{(t_0-1)^3}p_{n-1} + \frac{2(2+t_0)}{3 t_0^3 (t_0-1)^4}p_{n-1}^3 + O(|p|^4) 
\end{pmatrix}
\right | \\
&= \frac{2|p_{n-1}|^3|p_{n-2}|}{3 t_0^2 (t_0 -1)^6} + O(|p|^5).
\end{align}
By choosing $p_{n-1}$ and $p_{n-2}$ small enough but nonzero,
\begin{align}\label{eq: wedge nonzero}
    |\gamma(1) \wedge \dot \gamma(1) \wedge \ddot \gamma(1)| > 0.
\end{align}

\noindent \textbf{Step 4: Reaching a contradiction with \eqref{eq: 2 tan curved}}

By \eqref{eq: 2 tan curved}, $F(\tilde \ell_{\xi,v})$ is a family of lines. Thus for $s$ near $0$, $F(\tilde \ell_{\xi(s), v(s)})$ is a family of lines passing through $F(\tilde \ell_{0,0})$ and $F(\mathbf p)$, so it is contained in a fixed plane. In particular the tangent vectors of $F(\tilde \ell_{\xi(s),v(s)})$ at $F(\mathbf p)$ lie in a fixed plane, so $\eta(s) := \frac{d}{dt}F(\tilde \ell_{\xi(s),v(s)}(t_0))$ satisfies $|\eta(1) \wedge \dot \eta(1) \wedge \ddot \eta(1)| = 0$. But by the chain rule 
\begin{align}
    0&=|\eta(1) \wedge \dot \eta(1) \wedge \ddot \eta(1)| = |DF(\mathbf p) \gamma(1) \wedge DF(\mathbf p) \dot \gamma(1) \wedge DF(\mathbf p) \ddot \gamma(1)|,
\end{align}
which contradicts \eqref{eq: wedge nonzero}.
\end{proof}

\bibliographystyle{alpha}
\bibliography{reference}
	
\end{document}